\def\Cal{\mathcal}
\def\<{\langle}
\def\>{\rangle}
\def\Im{\operatorname{Im}}
\def\BQ{\overline{\bold Q}}
\def\be{\beta}
\def\al{\alpha}
\newtheorem{theorem}{Theorem}[section] 
\newtheorem{proposition}[theorem]{Proposition}
\newtheorem{lemma-definition}[theorem]{Lemma-Definition}
\begin{document}

\title{
Period integral of open Fermat surfaces \\
and special values of hypergeometric functions} 
\author{Tomohide Terasoma}

\begin{abstract}In the paper \cite{AOT}, we prove that the special values $\ _3F_2(1,1,a;b,c;1)$
of the hypergeometric function $\ _3F_2$ is a $\overline{\bold Q}$-linear combination of
$\log(\lambda)$ for $\lambda \in \overline{\bold Q}$ and $1$, if $a,b,c$ are rational numbers
satisfying a certain condition.
In \cite{A}, \cite{S}, the triples $(a,b,c)$ with this condition are completely classified
in relation with Hodge cycles on Fermat surfaces.
In this paper, we give an explicit expression of $\ _3F_2(1,1,a;b,c;1)$
which does not belong to the finite exceptional characters in the list of \cite{S}.
\end{abstract}
\maketitle
\setcounter{tocdepth}{1}

\markboth
{Period of open Fermat surfaces and hypergeometric function}
{Tomohide Terasoma}

\thispagestyle{empty}

\tableofcontents
\section{Introduction}
\subsection{Introduction and result of \cite{AOT}}
Let $p_1, \dots, p_5$ be real numbers such that
$p_4, p_4 \notin -1,-2,\cdots$ and $x$ be a complex number with $|x|<1$. 
We define the hypergeometric function 
$\ _3F_2(p_1,p_2,p_3;p_4,p_5;x)=F(p_1,p_2,p_3;p_4,p_5;x)$
by the series (\cite{E})
$$
F(p_1,p_2,p_3;p_4,p_5;x)=\sum_{k=0}^{\infty}\dfrac
{(p_1)_k(p_2)_k(p_3)_k}
{(p_4)_k(p_5)_k k!}x^k,
$$ 
where $(p)_k$ ($k=0,1,\cdots$) is the Pochhammer symbol defined by
$$
(p)_k=p(p+1)\cdots (p+k-1)=\dfrac{\Gamma(p+k)}{\Gamma(p)}.
$$
For a positive integer $m$ and 
rational numbers $\alpha_0, \alpha_1, \alpha_2, \alpha_3 \in \dfrac{1}{m}\bold Z-\bold Z$,
the limit
\begin{equation}
\label{limit and special value}
F(1,1,\alpha_1+\alpha_2+\alpha_3;\alpha_1+\alpha_2;\alpha_1+\alpha_3;1)=\lim_{x\to 1}
F(1,1,\alpha_1+\alpha_2+\alpha_3;\alpha_1+\alpha_2;\alpha_1+\alpha_3;x)
\end{equation}
exists.
The fractional part $\<\alpha\>$ of a rational number $\alpha$ 
is a rational number characterized by $\alpha-\<\alpha\> \in \bold Z$ and $0\leq \<\alpha\><1$.
\begin{theorem}[see \cite{AOT}]
\label{Asakura-Otsubo}
If the parameters $\alpha_0,\dots, \alpha_3$ satisfy the condition
\begin{equation}
\label{Hodge cycles}
\<t\alpha_0\>+\<t\alpha_1\>+\<t\alpha_2\>+\<t\alpha_3\>=2, \text{ for all }t\in (\bold Z/m\bold Z)^{\times},
\end{equation}
then 
\begin{equation}
\label{special values}
F(1,1,\alpha_1+\alpha_2+\alpha_3;\alpha_1+\alpha_2;\alpha_1+\alpha_3;1)
\in \overline{\bold Q}+\overline{\bold Q}\log(\overline{\bold Q}^{\times}).
\end{equation}
Here, $\overline{\bold Q}+\overline{\bold Q}\log(\overline{\bold Q}^{\times})$
denotes the $\overline{\bold Q}$ linear hull of $\log(\lambda)$ with $\lambda \in \overline{\bold Q}^{\times}$
and $1$.
\end{theorem}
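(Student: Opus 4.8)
The plan is to realize the value in \eqref{limit and special value} as a period entry of an explicit sub-motive of an \emph{open} Fermat surface, and to deduce \eqref{special values} from the fact that condition \eqref{Hodge cycles} forces that motive to be mixed Tate over $\BQ$. As a first step I would write the left-hand side of \eqref{limit and special value} as an iterated Euler integral: feeding the representation of ${}_2F_1(1,1;\alpha_1+\alpha_2;\,\cdot\,)$ into the transform that promotes ${}_2F_1$ to ${}_3F_2$, the value becomes, up to an explicit $\Gamma$-prefactor, the integral over the square $\{0\le t,u\le1\}$ of
\[
  \omega = t^{b_1}(1-t)^{b_2}(1-u)^{b_3}\,(1-tu)^{-1}\,dt\wedge du ,
\]
with $b_1,b_2,b_3\in\tfrac1m\mathbb Z$ affine in $\alpha_1,\alpha_2,\alpha_3$. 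The integer exponent $-1$ of $1-tu$ is the trace of the two unit upper parameters; the simple pole of $(1-tu)^{-1}$ along $\{tu=1\}$ meets the cycle at the corner $(1,1)$ and is the source of a logarithmic divergence, so I would regularize there and separate a convergent period from the boundary contribution that carries the $\log$.

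Next I would make $\omega$ single-valued by the degree-$m$ Kummer cover $t=T^m,\ u=U^m$, which presents the integral as a period on a Fermat-type surface $X$ together with its boundary divisor $D$ (the preimages of $\{t=0\}$, $\{1-t=0\}$, $\{1-u=0\}$ and $\{1-tu=0\}$). The group $\mu_m\times\mu_m$ acts, and the cohomology of $X\setminus D$ splits into character eigenspaces $V(\vec a)$ indexed by $\vec a=(\alpha_0,\dots,\alpha_3)$; the class of $\omega$ and the simplex select the eigen-piece $M:=V(\vec a)$, so that the integral is, up to the $\Gamma$-prefactor, the pairing of a $\BQ$-rational de Rham class against a Betti cycle inside the motive $M$, which is defined over $\BQ$.

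Condition \eqref{Hodge cycles} is what makes this motive tractable. On the Fermat surface the Hodge type of $V(t\vec a)$ is governed by $\<t\alpha_0\>+\dots+\<t\alpha_3\>\in\{1,2,3\}$, the value $2$ giving type $(1,1)$; \eqref{Hodge cycles} forces this sum to equal $2$ for \emph{every} $t\in(\mathbb Z/m\mathbb Z)^\times$, so all pure graded pieces of $M$ (and of its conjugates) are of Tate type and $M$ is mixed Tate over $\BQ$. This has two effects. First, by the Deligne--Koblitz--Ogus algebraicity of products of $\Gamma$-values — whose hypothesis is exactly the constancy in \eqref{Hodge cycles} — the $\Gamma$-prefactor is an algebraic multiple of a power of $2\pi i$, i.e. of $\log$ of a root of unity. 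Second, after a Tate twist the regularized period is the period of a Kummer extension
\[
  0 \longrightarrow \BQ(1) \longrightarrow E \longrightarrow \BQ(0) \longrightarrow 0
\]
of mixed Hodge structures; these are classified by $\mathrm{Ext}^1_{\mathrm{MHS}}(\BQ(0),\BQ(1))\cong\mathbb C^\times$, the period of the class $c$ being $\log c$, and since $E$ underlies a $1$-motive over $\BQ$ we have $c\in\BQ^\times$. Assembling the two contributions places the value in $\BQ+\BQ\log(\BQ^\times)$, which is \eqref{special values}.

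The main obstacle is the passage from ``$M$ is of type $(1,1)$ in every conjugate'' to ``the extension class is $\BQ$-rational.'' This requires combining the known algebraicity of Hodge cycles on Fermat surfaces with a careful tracking of the $\BQ$-de Rham structure and of the integrality of the residue along $\{tu=1\}$ through the weight (Gysin) spectral sequence, so that the only transcendental quantity surviving in the comparison isomorphism is a single $\log$ of an algebraic number. A more technical, but essential, second point is to make the regularization at the corner $(1,1)$ rigorous and to identify its finite part with the Kummer period, controlling simultaneously the convergence of the limit in \eqref{limit and special value} and the emergence of the boundary $\log$.
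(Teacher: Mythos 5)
Your proposal shares the paper's skeleton --- realize the value as a period of a character eigenspace of a Fermat-type variety over $\BQ$, use condition (\ref{Hodge cycles}) together with Lefschetz $(1,1)$ and the $\BQ$-rationality of algebraic cycles to produce a Kummer (Tate) extension, and read its class in $\mathrm{Ext}^1\simeq \BQ^{\times}\otimes K$ as a logarithm --- but the first reduction you make breaks the argument, and the damage propagates. Once the two unit upper parameters are put into the kernel $(1-tu)^{-1}$, the double Euler representation necessarily carries the prefactor $\Gamma(e)(d-1)/\bigl(\Gamma(c)\Gamma(e-c)\bigr)$ with $c=\alpha_1+\alpha_2+\alpha_3$, $d=\alpha_1+\alpha_2$, $e=\alpha_1+\alpha_3$ (or the same with $d,e$ exchanged), i.e.\ up to $\BQ^{\times}$ an inverse Beta value $B(c,e-c)^{-1}$. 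The Koblitz--Ogus criterion must then be applied to the Gamma monomial with arguments $c$, $e-c$, $e$, and its hypothesis is the constancy in $t$ of $\<tc\>+\<t(e-c)\>-\<te\>$; this is \emph{not} condition (\ref{Hodge cycles}), which constrains only $\sum_i\<t\alpha_i\>$. Concretely, take case (1) of Theorem \ref{Shioda's classification} with $(\alpha_0,\alpha_1,\alpha_2,\alpha_3)=(1/5,4/5,2/5,3/5)$: condition (\ref{Hodge cycles}) holds, the prefactor is an algebraic multiple of $\Gamma(2/5)^2/\bigl(\pi\,\Gamma(4/5)\bigr)$, and the Koblitz--Ogus sum $\<4t/5\>+\<3t/5\>-\<2t/5\>$ takes the values $1,0,1,0$ at $t=1,2,3,4$. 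So the step ``the $\Gamma$-prefactor is an algebraic multiple of a power of $2\pi i$'' is unjustified, and indeed false: up to $\BQ^{\times}$ and a power of $\pi$ the prefactor is a period of an abelian variety with CM by $\bold Q(\mu_5)$.

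The same defect reappears, with opposite sign, in your motivic step, which is why it cannot be patched locally. Since $\{1-tu=0\}$ is a genuine polar divisor (integer exponent), the Gysin/residue sequence forces into your motive $M$ the graded piece $H^1$ of $\bold P^1\setminus\{0,1,\infty\}$ with exponents $\alpha_3$, $\alpha_1$, $\alpha_0+\alpha_2$ $\pmod{\bold Z}$ (the restriction of the local system to $tu=1$). This piece has odd weight, hence is never of Tate type, and it does not vanish: in the example above its conjugates have fractional-part sums $2,1,2,1$, i.e.\ Hodge types $(1,0),(0,1),(1,0),(0,1)$ --- a CM abelian piece. Your assertion that (\ref{Hodge cycles}) makes all pure graded pieces of $M$ Tate conflates two different eigenspace counts: $\sum_i\<t\alpha_i\>$ governs the Fermat \emph{surface} eigenspace, not the eigenspaces attached to the combined exponents $c$, $e-c$, $\alpha_0+\alpha_2$ created by your choice of kernel. (In reality the CM period of this residue piece cancels against your $\Gamma$-prefactor; your argument never establishes this cancellation --- it claims instead that each factor is separately of Tate kind, which is false.) The paper's route avoids the problem at the source: by Proposition \ref{relative integral and hgf}, after $\xi=(1-s)/(1-st)$, $\eta=(1-t)/(1-st)$, the $\ _3F_2$ value equals $(\alpha_1+\alpha_2)(\alpha_1+\alpha_3)\int_{\Gamma_1}(\xi+\eta-1)^{\alpha_1-1}\xi^{\alpha_2-1}\eta^{\alpha_3-1}\,d\xi\, d\eta$, the prefactor $B(1,\alpha_1+\alpha_2)^{-1}B(1,\alpha_1+\alpha_3)^{-1}$ being \emph{rational}; all exponents are then fractional, there is no polar divisor, the form is a $\BQ$-rational relative de Rham class on $(X_m,B)$, and the only potentially non-Tate constituent is $H^2(X_m)(\chi_{\bold a})$ itself --- exactly the object that (\ref{Hodge cycles}) plus Lefschetz $(1,1)$ controls. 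A secondary error: with $d,e$ as in the statement, your double integral diverges non-integrably along the whole edge $t=1$ (the exponent of $1-t$ is $e-c-1=-\alpha_2-1<-1$), not logarithmically at the corner, so the proposed corner regularization does not address the actual divergence; for the shifted parameters appearing in Proposition \ref{relative integral and hgf} the integral converges absolutely and needs no regularization at all.
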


As is explained in the next subsection, the special value of hypergeometric function of the above type is related to
the extension class of mixed $K$-mixed Hodge structures
\begin{equation}
\label{first extension class}
0\to H^1_{Hg}(B,K)(\chi_{\alpha})\to
H^2_{Hg}(X_m,B,K)(\chi_{\alpha})\to
H^2_{Hg}(X_m,K)(\chi_{\alpha}) \to 0
\end{equation}
arising from the relative cohomologies of Fermat surface $X_m$ and its divisor $B$.
Theorem \ref{Asakura-Otsubo}
is a consequence of the fact that the extension class of 
(\ref{first extension class}) is actually 
a Hodge realization of a mixed Tate motives if the condition (\ref{Hodge cycles}) is satisfied.
In the proof of Theorem \label{Asakura-Otsubo} \ref{Asakura-Otsubo} in \cite{AOT}, 
we use Lefschetz-Hodge theorem
and the fact that an algebraic cycles on algebraic variety defined over $\overline{\bold Q}$
are defined on $\overline{\bold Q}$. Thus the method in \cite{AOT} does not give an closed formulas for
the special value (\ref{limit and special value}).
In this paper, we give an explicit formula for the above theorem
for non-exceptional cases (see Theorem \ref{Shioda's classification}
 for the definition of
exceptional cases).

Let us explain the outline of the paper. In Section 1, we recall 
the result of \cite{AOT} by introducing period integral of an open
Fermat surfaces, or dually period for the relative cohomology
Fermat surfaces with their divisors. Let $\chi$ be a character of a group $G_m$ acting
on $X_m$.
If the $\chi$-part of the cohomology
of Fermat surface is generated by algebraic cycles, the corresponding
periods can be written using the logarithmic function evaluated at
algebraic numbers. We recall the result by Aoki and Shioda on
algebraic cycles on Fermat surfaces. We also recall that
except for finitely many characters, they are obtained by
four types of characters. We give an explicit formula belonging to four types.

In Section 2, we explain that the existence of algebraic cycles
implies the exactness for certain rational differential forms.
The equation of algebraic cycles are key to find differential 
forms which bound the given differential forms.
Actually, if one find such differential form, the story is
independent of the existence of algebraic cycles.

From Section 3 to Section 5, we compute the integral and
show that the integral expressing the extension class 
actually are expressed by simpler integration, which
gives the explicit expression by logarithmic functions.
Here we use Stokes formula for the variety obtained by blowing up
$\bold C^2$.

In the first proof of \cite{AOT}, we used regulator maps for a symbols.
If the symbol can be written explicitly, we also have a closed formula of
the special value (\ref{limit and special value}).
In \cite{AY}, they obtained explicit formulas for the special values 
for some cases using this method. The one form obtained
in Section 2 gives a key to find an explicit expression of
related $K$-group via symbols. This topics will be left to a 
future research.

\vskip 0.1in
{\bf Acknowledgement}
This paper was first considered as a continuation of the paper
\cite{AOT}.
The author express his acknowledgement to M. Asakura and N. Otsubo
for discussion and private communication.
He also thank them to let him know the relationship with
regulator maps in $K$-theory and hypergeometric functions, which
is a driving force to let him go into this subject.

\subsection{Algebraic cycles on Fermat surfaces}
A proof of the above theorem is based on the fact that the value
(\ref{special values})
is considered as a period integral of
certain relative cohomology of Fermat surface.
We recall some properties of cohomologies of Fermat surfaces.
Let $X_m$ be an affine Fermat surfaces defined by
$$
X_m=Spec(\BQ[u,v,w]/(u^m+v^m-1-w^m))
$$
and $\mu_m$ be the group of $m$-th roots of unities in $\bold C^{\times}$
and set $K=\bold Q(\mu_m)$.
Then the group
$G_m=(\mu_m)^3$ acts on $X_m$ by
\begin{equation}
\label{Fermat abel action} 
\rho(g_1,g_2,g_3):(u,v,w)\mapsto (g_1u,g_3v,g_3w)
\end{equation}
for $(g_1,g_2,g_3) \in G_m$.
For $\bold a=(\alpha_0,\alpha_1,\alpha_2,\alpha_3)\in \dfrac{1}{m}\bold Z^4$ with
$0<\alpha_i<1$, $\sum_i\alpha_i\in \bold Z$, we define a character 
$\chi=\chi_{\bold a}$
by 
\begin{equation}
\label{Kummer char on Fermat}
\chi_{\bold a}(g_1,g_2,g_3)=g_1^{a_1}g_2^{a_2}g_3^{a_3}
\end{equation}
where $a_i=m\alpha_i$.
The $\chi_{\bold a}$-part of the singular cohomology $H^2_B(X_m,K)$ and algebraic de Rham cohomology
$H^2_{dR}(X_m/\overline{\bold Q})$ over $\overline{\bold Q}$ of $X_m$ 
are denoted by
$H^2_B(X_m,K)(\chi_{\bold a})$
and
$H^2_{dR}(X_m/\overline{\bold Q})(\chi_{\bold a})$, respectively.
Then we have
$$
\dim(H^2_{dR}(X_m/\overline{\bold Q})(\chi_{\bold a}))=
\dim(H^2_B(X_m,K)(\chi_{\bold a}))=1.
$$
The condition (\ref{Hodge cycles}) is
equivalent to the following condition (see \cite{S}):
\begin{equation}
\label{Hodge cycles 2}
\text{The space }H^2_B(X_m,K)(\chi_{\bold a}) \text{ is generated by Hodge cycles.}
\end{equation}
The complete classification of the set of indices $(a_0,a_1,a_2,a_3)$
satisfying the condition (\ref{Hodge cycles 2})
 is conjectured in \cite{S} and proved by \cite{A}.
\begin{theorem}[\cite{S}, \cite{A}]
\label{Shioda's classification}
Let $\alpha_0,\alpha_1,\alpha_2,\alpha_3$ be elements in
$\bold Q\cap (0,1)$ with $\sum_{i=0}^3\alpha_i\in \bold Z$
satisfying the condition (\ref{Hodge cycles}) and $m$ be the 
common denominator. Then one of the following holds.
\begin{enumerate}
 \item 
There exists and element $\alpha, \beta \in \bold Q$ such that
$(\alpha_0,\alpha_1,\alpha_2,\alpha_3)$ is equal to 
$(\alpha,-\alpha,\beta,-\beta)$ up to a permutation.
 \item 
There exists and element $\alpha \in \bold Q$ such that
$(\alpha_0,\alpha_1,\alpha_2,\alpha_3)$ is equal to 
one of the following up to a permutation.
\begin{enumerate}
 \item 
$(2\alpha,1-\alpha,-\alpha+\dfrac{1}{2},\dfrac{1}{2})$
\item
$(3\alpha,1-\alpha,-\alpha+\dfrac{1}{3},-\alpha+\dfrac{2}{3})$
\item
$(4\alpha,1-2\alpha,-\alpha+\dfrac{1}{4},-\alpha+\dfrac{3}{4})$
\end{enumerate}
\item
$m\leq 180$ and does not satisfies (1) and (2).
\end{enumerate}
In particular there exist only finitely many vectors $(\alpha_0, \dots, \alpha_3)$
belonging to the case (3).
\end{theorem}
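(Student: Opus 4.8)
The plan is to treat this as a self-contained combinatorial classification of the \emph{uniform} solutions of $\sum_{i=0}^3\langle t\alpha_i\rangle=2$ for all $t\in(\mathbf Z/m\mathbf Z)^\times$, invoking the geometry of $X_m$ only to motivate the families. Since $m$ is the common denominator we may assume $\gcd(a_0,a_1,a_2,a_3,m)=1$, so that $t\alpha_i\notin\mathbf Z$ for every unit $t$; taking $t=1$ and using $\langle\alpha_i\rangle=\alpha_i$ records the normalisation $\sum_i\alpha_i=2$. Everything then reduces to understanding which $4$-tuples keep the sum of fractional parts pinned at the constant $2$ as $t$ varies.

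The first step is to isolate the inverse-pair phenomenon behind family (1). The elementary identity $\langle t\alpha\rangle+\langle t(1-\alpha)\rangle=\langle t\alpha\rangle+\langle -t\alpha\rangle=1$, valid for every $t$ with $t\alpha\notin\mathbf Z$, shows that a pair $\{\alpha,1-\alpha\}$ contributes the constant $1$ to the sum independently of $t$. Hence any tuple $(\alpha,-\alpha,\beta,-\beta)$ is automatically uniform, and I would henceforth restrict attention to \emph{indecomposable} solutions, in which the four indices admit no such pairing.

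The second step is to recognise the remaining infinite families as manifestations of the Gauss--Hermite multiplication identity $\sum_{j=0}^{n-1}\langle x+\tfrac jn\rangle=\langle nx\rangle+\tfrac{n-1}2$. For $n=2,3,4$ this identity trades a single index $n\alpha$ for the complementary indices $-\alpha+\tfrac jn$, and forcing the fractional parts to sum to $2$ pins down precisely the tuples (2a)--(2c). The real content is the converse: I would analyse how the step function $t\mapsto\langle t\alpha_i\rangle$ jumps as $t$ increases and show that the only way these jumps can cancel for every $t$, in the absence of an inverse pairing, is through one of the multiplication relations above among the $\alpha_i$. This is a finite case analysis once the possible jump patterns are enumerated.

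The hard part will be the effective finiteness statement that every indecomposable solution not of type (2) satisfies $m\le 180$; this is the genuine depth of the Aoki--Shioda theorem and does not follow from the formal manipulations above. I would attack it by measuring the deviation of $\sum_i\langle t\alpha_i\rangle$ from its mean: writing $\langle x\rangle-\tfrac12$ through its sawtooth Fourier expansion turns the uniformity condition into the simultaneous vanishing $\sum_i(\langle t\alpha_i\rangle-\tfrac12)=0$ of a family of character sums (equivalently, of Jacobi-sum valuations via Gross--Koblitz), and a sporadic solution corresponds to an unexpected coincidence among these. Bounding how many residues $ta_i\bmod m$ can simultaneously avoid the prescribed intervals, and then clearing the finitely many small moduli by direct computation, is what produces the explicit constant $180$. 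I expect this conversion of the uniform fractional-part condition into an effective bound on $m$ to be by far the most delicate ingredient --- precisely the step for which the statement defers to \cite{S} and \cite{A}.
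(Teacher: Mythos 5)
Your proposal does not reconstruct a proof of this theorem, and it is worth noting first that the paper does not contain one either: Theorem \ref{Shioda's classification} is imported from Shioda \cite{S} (who conjectured the classification and settled small cases) and Aoki \cite{A} (who proved it), so the only comparison available is against Aoki's argument, which is a long and delicate analysis. What you prove correctly is the easy inclusion: inverse pairs contribute $\langle t\alpha\rangle+\langle -t\alpha\rangle=1$ for every unit $t$, and Hermite's identity $\sum_{j=0}^{n-1}\langle x+\tfrac{j}{n}\rangle=\langle nx\rangle+\tfrac{n-1}{2}$ with $n=2,3,4$ shows that the tuples (2a)--(2c) satisfy condition (\ref{Hodge cycles}). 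But that is the converse of what the theorem asserts; the theorem is the classification, namely that apart from these families there are no solutions except finitely many with $m\le 180$.

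Two concrete gaps. First, your Step 2 claim --- that a finite case analysis of jump patterns shows every indecomposable uniform solution arises from a multiplication relation --- is false as stated: the exceptional characters are indecomposable uniform solutions belonging to none of the types (2a)--(2c), for instance $(\alpha_i)=(1,4,9,10)/12$ (no two of $1,4,9,10$ sum to $0$ modulo $12$), and the paper's appendix lists $101$ Galois orbits of such solutions up to $m=180$. Any correct argument must carry these exceptions along, so no clean local analysis of jump cancellation can terminate in the families (2) alone. Second, the effective finiteness $m\le 180$, which you rightly identify as the real content, is not addressed by your sketch: the plan to bound character sums and then ``clear the finitely many small moduli by direct computation'' is circular, since one cannot run the finite computation before an explicit bound on $m$ has been established, and producing that bound is precisely the theorem. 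Your closing deferral to \cite{S} and \cite{A} makes the proposal a citation rather than a proof --- which, to be fair, is exactly the status this theorem has in the paper itself.
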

The characters listed in (3) of Theorem \ref{Shioda's classification}, 
which is not appeared in (1), (2) is
called the exceptional characters.
There are $101$ Galois orbits in the exceptional characters 
and the list of orbits is given in Appendix (Compare for the list in \cite{S}).

\subsection{Extension of mixed Tate motives}
\subsubsection{Tate Hodge structure}
Let $m\geq 2$ be an integer and $K$ be the field generated by $\mu_m=\<\bold e(1/m)\>$ 
over $\bold Q$.
A mixed $K$-Hodge structure $V_{Hg}$ consists of triple
$$
V_{Hg}=((V_{dR}, F,\overline{F}, W),(V_B,W),c)
$$
of filtered vector space over $\bold C, K$ and a comparison isomorphism 
$$
c:V_B\otimes_K\bold C\xrightarrow{\simeq}V_{dR}
$$
compatible with $W$ such that $(F,\overline{F}, W)$ is oposit filtration defined in 
\cite{D}. For mixed $K$-Hodge structures $V_{1,Hg}, V_{2,Hg}$, a homomorphisms
from $\varphi:V_{1, Hg}\to V_{1, Hg}$ is a pair of homomorphisms 
$\varphi_B:V_{1,B} \to V_{2,B}$ and
$\varphi_{dR}:V_{1,dR} \to V_{2,dR}$ 
preserving comparison isomorphisms $c$ and filtrations $W, F$.
By proposition in  \cite{D}, the category of mixed $K$-Hodge structures
whose morphisms are morphism of mixed $K$-Hodge structures becomes
an abelian category. For a mixed Hodge structure $V_{Hg}=(V_B, V_{dR}, c)$
over $\bold Q$, the tensor product $V_{Hg}\otimes_{\bold Q} K=(V_{B}\otimes_{\bold Q}K, V_{dR},c)$
becomes a mixed $K$-Hodge structure in a natural way.

Tate Hodge structure $\bold Q(1)_{Hg}$ is defined by the triple
$(\bold Q(1)_{dR},\bold Q(1)_{B},c)$, where $\bold Q(1)_{dR}$ and $\bold Q(1)_{B}$
are one dimensional vector spaces over $\bold C$ and $\bold Q$ generated by
$\xi_{dR}$ and $\xi_{B}$ with $c(\xi_{dR})=(2\pi i)\xi_B$.
We define $\bold Q(i)_{Hg}=(\bold Q(i)_{dR},\bold Q(i)_{B},c)$,
where
$$
\bold Q(i)_{\star}=
\begin{cases}
L_{\star}^{\otimes i} & (i\geq 0)
\\
Hom_{L_{\star}}(L_{\star}^{\otimes (-i)},L_{\star})& (i<0)
\end{cases}
$$
for $\star=dR, B$ and $L_{dR}=\bold C, L_{B}=\bold Q$.
The tensor product $K(i)_{Hg}=\bold Q(i)_{Hg}\otimes_{\bold Q}K$ 
of $\bold Q(i)_{Hg}$ is also called a Tate Hodge structure.
For $K(-1)_{Hg}=(K(-1)_{dR},K(-1)_{B},c)$, $K(-1)_{dR}$ and $K(-1)_B$ are
generated by $\xi^{-1}_{dR}$ and $\xi^{-1}_{B}$.

\subsubsection{Extension class}
Let 
$$
e_{Hg}:0 \to K \to E \xrightarrow{p} K(-1)\to 0,
$$ 
be an exact sequence of mixed $K$-Hodge structures.
Let $c:K(-1)_B\otimes_K \bold C\to K(-1)_{dR}$
be the comparison isomorphism and
$s_B$ and $s_{dR}$ be elements in $E_{B}$ such that
\begin{enumerate}
\item
$p(s_{B})$ is an element in $E_{B}$ such that $p(s_{B})=\xi^{-1}_B$,
\item
$s_{dR}$ is the inverse image of $\xi_{dR}^{-1}$ 
under the isomorphism
$F^1E_{dR} \to F^1K(-1)_{dR}=K(-1)_{dR}$.
\end{enumerate}
Then we have an isomorphism 
$$
cl:Ext^1_{MHS(K)}(K(-1),K)=\bold C/2\pi iK
$$
by setting 
$$
cl(e_{Hg})=2\pi ic(s_B)-s_{dR} \in 2\pi iK_{dR}=\bold C\xi^0_{dR} \text{ mod }2\pi iK_B
=2\pi iK\xi_{B}^0.
$$
The lifting satisfying the above condition (2) is called
the Hodge canonical lift.

\subsubsection{Extensions arising from relative cohomologies of Fermat surfaces}

Let $(X,B)$ a pair of varieties such that $B\subset X$. Let $H^i_B(X,B,K)$ and
$H^i_{dR}(X,B/\bold C)$
be the singular and de Rham cohomologies of the pair $(X,B)$ with
the coefficient in $K$ and de Rham cohomology with the coefficient in $\bold C$.
By the natural comparison map $c$, we have a mixed $K$-Hodge structure $(H_{B}(X,B,K), H_{dR}(X,B),c)$
Let $G \to Aut(X,B)$ be a group action of $G$ on $(X,B)$ and
$\chi$ be a character of $G$ with the value in $K^{\times}$.
Then $H_{Hg}(X,B,K)(\chi)=(H_{B}(X,B,K)(\chi), H_{dR}(X,B)(\chi),c)$ becomes a mixed $K$-Hodge structure.

We define a subvariety $B$ of Fermat sufrace $X_m$ by
$$
B:=\{u^m=1, v^m=w^m\}\cup \{v^m=1, u^m=w^m\}.
$$
The the variety $B$ is a union of affine lines.

The $\chi_{\bold a}$-part of the relative singular cohomology $H^2_B(X,B,K)$
and relative de Rham cohomology $H^2_{dR}(X,B/\BQ)$ of $(X,B)$ are denoted by
$H^2_B(X,B,K)(\chi_{\bold a})$ and
$H^2_B(X,B/\BQ)(\chi_{\bold a})$, respectively.
By the long exact sequence of relative cohomology, we have
an exact sequences:
\begin{align}
\label{exact cohomology Hodge}
&0\to H^1_B(B,K)(\chi_{\bold a})\to
H^2_B(X,B,K)(\chi_{\bold a})\to
H^2_B(X,K)(\chi_{\bold a}) \to 0, \\
\nonumber
&0\to H^1_{dR}(B/\BQ)(\chi_{\bold a})\to
H^2_{dR}(X,B/\BQ)(\chi_{\bold a})\to
H^2_{dR}(X/\BQ)(\chi_{\bold a}) \to 0.
\end{align}
These sequences are compatible with the comparison maps
$$
c:H^2_B(X,K)(\chi_{\bold a})\otimes_K\bold C \xrightarrow{\simeq}
H^2_{dR}(X/\BQ)(\chi_{\bold a})\otimes_{\BQ}\bold C, \quad \text{ etc. }
$$
We set 
$$
H^2_{Hg}(X,K)(\chi_{\bold a})=(
H^2_{B}(X,K)(\chi_{\bold a}),
H^2_{dR}(X,K)(\chi_{\bold a})\otimes_{\overline{\bold Q}} \bold C,c)
$$
Therefore the sequences (\ref{exact cohomology Hodge})
together with the comparison maps
defines a Yoneda extension class $e$ of 
mixed $K$-Hodge structures
$$
e_{Hg}(X,\chi_{\bold a})\in Ext^1_{MHS(K)}(H^2_{Hg}(X,K)(\chi_{\bold a}),H^1_{Hg}(B,K)(\chi_{\bold a})).
$$
Under the condition (\ref{Hodge cycles 2}),
we have isomorphisms of Hodge structures
\begin{equation}
\label{isomorphism as motives}
H^1_{Hg}(B,K)(\chi_{\bold a})\xrightarrow{\iota_1} K_{Hg}, 
\quad H^2_{Hg}(X,K)(\chi_{\bold a})\xrightarrow{\iota_2} K(-1)_{Hg}.
\end{equation}

To prove Theorem \ref{Asakura-Otsubo}, we use theory of 
mixed motives and its Hodge realization.
By the Lefschetz-Hodge theorem, these isomorphisms arise from algebraic correspondences.
Since the pair of varieties $(X,B)$ are defined over $\overline{\bold Q}$,
the above algebraic correspondence is defined over $\overline{\bold Q}$.
Let $MM(K)/L$ be the derived category of mixed motives over $L$
with the coefficients in $M$.
Therefore we have the following commutative diagram where the horizontal
arrow are induced by algebraic correspondence over $\overline{\bold Q}$.
$$
\begin{matrix}
 Ext^1_{MM(K)/\overline{\bold Q}}(h(X)_K(\chi_{\bold a}),h(E)_K(\chi_{\bold a})) &\to&
Ext^1_{MM(K)/\overline{\bold Q}}(K(-1),K)
\\
\downarrow & & \downarrow
\\
 Ext^1_{MM(K)/\bold C}(h(X)_K(\chi_{\bold a}),h(E)_K(\chi_{\bold a})) &\to&
Ext^1_{MM(K)/\bold C}(K(-1),K)
\\
\downarrow & & \downarrow
\\
 Ext^1_{MHS(K)}(h(X)_K(\chi_{\bold a}),h(E)_K(\chi_{\bold a})) &\to&
Ext^1_{MHS(K)}(K(-1),K)
\end{matrix}
$$
Since the Hodge realization map 
$$
Ext^1_{MM(K)/\overline{\bold Q}}(K(-1),K)\simeq \overline{\bold Q}^\times \otimes K
\xrightarrow{\rho} 
Ext^1_{MHS(K)}(K(-1)_{Hg},K_{Hg})=(\bold C/2\pi iK)
$$
is given by $e_{M}\mapsto \log(e_M)$,
we have
$$
cl(e_{Hg}(X,\chi_{\bold a}))\in K\log(\overline{\bold Q}^{\times})=\Im(\rho),
$$


\subsubsection{Extension class and relative periods}

The extension class $e_{Hg}$ can be computed by the period integral as follows:
Let $s_B$, $s_{dR}$ be
liftings of a common bases of $H^2_B(X,K)(\chi_{\bold a})$
$H^2_{dR}(X,\BQ)(\chi_{\bold a})$ satisfying the conditions
(1)-(2) in the last subsection,
and $b_{dR}$ be a base of $H^1_{dR}(B/\BQ)(\chi_{\bold a})$.
Then we have $2\pi i c(s_B)=s_{dR}+cl(e_{Hg})b_{dR}$,
where $e_{Hg}=e_{Hg}(X,\chi_{\bold a})$.

We consider the Betti part of the dual of the exact sequences 
(\ref{exact cohomology Hodge}).
\begin{align*}
&0\to 
H_2^B(X,K)(\chi_{-\bold a}) \to  
H_2^B(X,B,K)(\chi_{-\bold a})\xrightarrow{\partial}
H_1^B(B,K)(\chi_{-\bold a})\to 0
\end{align*}
Let $\gamma$ be a $K$-base of 
$H_1^B(B,K)(\chi_{-\bold a})$
and $\Gamma$ be an element in
$H_2^B(X,B,K)(\chi_{-\bold a})$
such that $\partial\Gamma=\gamma$.
The cup product induces a map
$$
(*,*):H_2^B(X,B,K)(\chi_{-\bold a})\times
H^2_B(X,B,K)(\chi_{\bold a})\to K.
$$
Then
\begin{align*}
2\pi i(\Gamma,s_B)
=&2\pi i(c(\Gamma),c(s_B))
=(c(\Gamma),s_{dR})+cl(e_{Hg})(c(\Gamma),b_{dR})
\\
=&(c(\Gamma),s_{dR})+cl(e_{Hg})(c(\gamma),b_{dR}).
\end{align*}
Since $(\Gamma,s_B)\in K$, $(c(\gamma),b_{dR})\in \BQ$,
we have
$$
(c(\Gamma),s_{dR})\in 2\pi i K+\BQ\log(\overline{\bold Q}^{\times})=
\BQ\log(\overline{\bold Q}^{\times}).
$$
Let $s_{dR}'$ be an arbitrary lifting. Then $s_{dR}'$ is written as
$s_{dR}'=s_{dR}+ab_{dR}$
with $a\in \BQ$. Thus we have
$$
(c(\Gamma),s_{dR}')=
(c(\Gamma),s_{dR})+
(c(\Gamma),ab_{dR})=
(c(\Gamma),s_{dR})+
a(c(\gamma),b_{dR})\in \BQ\log(\overline{\bold Q}^{\times})+\BQ.
$$
As for the precise argument, see \cite{AOT}.
\subsection{Period integrals for relative cohomologies 
and hypergeometric function}
We consider a sequence of morphisms:
\begin{equation}
\label{morphisms for coverings}
\begin{matrix}
X_{m} &\xrightarrow{\pi''} &\bold A^2
\\
(u,v,w)&\mapsto & (\xi,\eta)=(u^m,v^m) & 
\\
\cup & & \cup
\\
\Gamma''_1 & &\Gamma_1
\end{matrix}
\end{equation}

Let
$\Gamma_1$ be chains of $\bold A^2(\bold C)$ defined by
\begin{align*}
&\Gamma_1=\{(\xi,\eta)\in \bold R^2
\mid 0\leq \xi\leq 1, 0\leq \eta\leq 1,1\leq  \xi+\eta\},
\end{align*}
with the standard orientations and
$\Gamma''_1$ be a topological cycle 
on $X_{m}$ defined by
\begin{align*}
&\Gamma''_1=\bigg\{(u,v,w)\in X_{pm}\mid
\pi'\circ\pi(u,v,w)\in\Gamma_1, u,v,w\in \bold R_+\bigg\},
\end{align*}
Let $\gamma_0$ be a one chain defined by 
$$
\{t:[0,1]\to (1,t)\in X_{m}\}+
\{t:[0,1]\to (1-t,1)\in X_{m}\}
$$
For a chain $\beta$ in $X_m$, we set 
$$
pr_{\chi}(\beta)=\dfrac{1}{|G|}\sum_{g\in G}\chi(g)^{-1}g(\beta)
$$
\begin{proposition}
Under the above notation, we have
$pr_{\chi}(\Gamma_1)\in H_2^B(X,B,K)(\chi)$,
$pr_{\chi}(\gamma_0)\in H_1^B(B,K)(\chi)$ and
$$
\partial pr_{\chi}(\Gamma_1)=pr_{\chi}(\gamma_0).
$$
Moreover, $pr_{\chi}(\gamma_0)$ is a base of 
$H_1^B(B,K)(\chi)$.
\end{proposition}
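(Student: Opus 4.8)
The plan is to read $pr_\chi(\Gamma_1)$ as $pr_\chi(\Gamma_1'')$, the image under the projector of the real positive lift $\Gamma_1''\subset X_m$ of the triangle $\Gamma_1$ (so that $\pi''$ carries $\Gamma_1''$ homeomorphically onto $\Gamma_1$). The only formal property of $pr_\chi$ I use at the outset is that for every $h\in G_m$ one has $h\cdot pr_\chi(\beta)=\chi(h)\,pr_\chi(\beta)$, which is immediate from the substitution $g\mapsto h^{-1}g$ in the defining sum; hence any (relative) cycle I produce automatically lands in the $\chi$-eigenspace. Thus the real content is threefold: (i) the identity $\partial\,pr_\chi(\Gamma_1'')=pr_\chi(\gamma_0)$ together with the fact that its boundary lies in $B$; (ii) that $pr_\chi(\gamma_0)$ is a genuine $1$-cycle on $B$; and (iii) that it is nonzero and generates the one-dimensional space $H_1^B(B,K)(\chi)$.

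For (i) I would describe $\Gamma_1$ in the coordinates $(\xi,\eta)=(u^m,v^m)$ as the triangle with vertices $(1,0),(0,1),(1,1)$, whose three edges lie respectively on $\{\xi=1\}$, $\{\eta=1\}$ and $\{\xi+\eta=1\}$. Since $w^m=\xi+\eta-1$ on $X_m$, the first edge lifts into $\{u^m=1\}\subset B$ and the second into $\{v^m=1\}\subset B$ -- and these two lifts are exactly the two segments making up $\gamma_0$ -- while the hypotenuse lifts into $\{w=0\}$. The key point is that the $\{w=0\}$-part of $\partial\Gamma_1''$ dies under $pr_\chi$: the third factor $\mu_m\subset G_m$ (acting by $w\mapsto g_3 w$) fixes $\{w=0\}$ pointwise, so in the projector sum the summation over that factor splits off as $\sum_{g_3\in\mu_m} g_3^{-a_3}=0$, using $a_3=m\alpha_3\not\equiv 0\pmod m$. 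Hence $\partial\,pr_\chi(\Gamma_1'')=pr_\chi(\gamma_0)$, its boundary lies in $B$, and claims (1) and (3) of the Proposition follow. For (ii), note $\partial\gamma_0=(0,1,0)-(1,0,0)$; each endpoint has two vanishing coordinates ($v=w=0$, resp. $u=w=0$), so the same factor-sum argument gives $pr_\chi((1,0,0))=pr_\chi((0,1,0))=0$ (here one only needs one of $a_1,a_2,a_3$ nonzero). Thus $pr_\chi(\gamma_0)$ is a closed $1$-chain supported on $B$ and defines a class in $H_1^B(B,K)(\chi)$, giving claim (2).

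For (iii), which I expect to be the main obstacle, I would run Mayer--Vietoris for $B=B_1\cup B_2$ with $B_1=\{u^m=1\}$, $B_2=\{v^m=1\}$. Each $B_i$ is a disjoint union of $m$ contractible \emph{stars} of affine lines (indexed by the value of $u$, resp.\ $v$), so $H_1(B_i)=0$ and $H_0(B_i)$ is the permutation module on these $m$ stars; since $G_m$ acts on $\pi_0(B_1)$ only through the first $\mu_m$-factor and on $\pi_0(B_2)$ only through the second, the standing assumptions $a_1,a_2,a_3\not\equiv0$ force $H_0(B_1)(\chi)=H_0(B_2)(\chi)=0$. On the other hand $B_1\cap B_2=\mu_m^3$ (the points with $u,v,w\in\mu_m$), on which $G_m$ acts simply transitively, so $H_0(B_1\cap B_2)$ is the regular representation and its $\chi$-part is one-dimensional. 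The Mayer--Vietoris sequence, which stays exact after passing to $\chi$-eigenspaces because $|G_m|$ is invertible in $K$, then collapses to an isomorphism $\delta\colon H_1^B(B,K)(\chi)\xrightarrow{\ \sim\ } H_0(B_1\cap B_2)(\chi)\cong K$.

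It then remains to evaluate $\delta$ on our class. Writing $pr_\chi(\gamma_0)=pr_\chi(\gamma_0|_{B_1})+pr_\chi(\gamma_0|_{B_2})$, the boundary of the $B_1$-part is $pr_\chi((1,1,1))-pr_\chi((1,0,0))=pr_\chi((1,1,1))$, i.e.\ the eigen-projection of the point mass at $(1,1,1)\in\mu_m^3$. This is nonzero -- its coefficient at the base point $(1,1,1)$ equals $1/|G_m|\neq0$ -- so $\delta(pr_\chi(\gamma_0))\neq0$, and therefore $pr_\chi(\gamma_0)$ generates $H_1^B(B,K)(\chi)$. The delicate part throughout is the combinatorial bookkeeping: determining exactly which factor-sums vanish, and tracking orientations carefully enough that after projection the only surviving boundary contribution is the single point mass at $(1,1,1)$.
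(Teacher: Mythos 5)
The paper offers no proof of this proposition at all --- it is stated bare and the text moves directly on to the definition of $\omega$ and the period computation --- so there is no argument of the author's to compare yours against; your proposal supplies a proof where the paper has a gap. Having checked it, I find your argument correct, including your (necessary) reading of $pr_{\chi}(\Gamma_1)$ as $pr_{\chi}(\Gamma_1'')$ for the positive real lift. Its two pillars are sound. First, $pr_{\chi}$ annihilates any chain fixed pointwise by a coordinate factor $\mu_m$ on which $\chi$ is nontrivial; since $0<\alpha_i<1$ forces $a_i\not\equiv 0 \pmod m$ for all $i$, this kills both the hypotenuse component of $\partial\Gamma_1''$ lying in $\{w=0\}$ and the endpoints $(1,0,0)$, $(0,1,0)$ of $\gamma_0$, which yields the eigenspace membership, the relative-cycle property, the boundary formula $\partial pr_{\chi}(\Gamma_1)=pr_{\chi}(\gamma_0)$, and the closedness of $pr_{\chi}(\gamma_0)$. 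Second, the Mayer--Vietoris computation correctly settles the generation statement: each $B_i$ is a disjoint union of $m$ contractible stars of $m$ affine lines, permuted through a single $\mu_m$-factor, so $H_1(B_i)=0$ and $H_0(B_i,K)(\chi)=0$, while $B_1\cap B_2\cong\mu_m^3$ carries the regular representation of $G_m$, so $\delta:H_1^B(B,K)(\chi)\to H_0(B_1\cap B_2,K)(\chi)\cong K$ is an isomorphism; evaluating on the decomposition $\gamma_0=\gamma_0|_{B_1}+\gamma_0|_{B_2}$ gives $\delta(pr_{\chi}(\gamma_0))=\pm\, pr_{\chi}((1,1,1))\neq 0$, whence $pr_{\chi}(\gamma_0)$ is a basis. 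If you write this up, make explicit the two routine facts you use silently: Mayer--Vietoris for the \emph{closed} cover $B=B_1\cup B_2$ is legitimate because algebraic sets are triangulable with $B_1$, $B_2$ as subcomplexes, and passing to $\chi$-eigenspaces preserves exactness because $|G_m|$ is invertible in $K$; and check the orientation once (the counterclockwise boundary of $\Gamma_1$ traverses the edges on $\{\xi=1\}$ and $\{\eta=1\}$ exactly as $\gamma_0$ is parametrized), so that the identity holds with no sign.
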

We set
$$
\omega=(\xi+\eta-1)^{\alpha_1-1}\xi^{\alpha_2-1}\eta^{\alpha_3-1}.
$$
Since the pairing is given by integrals, we have
\begin{align*}
(c(pr_{\chi}(\Gamma_1)),\omega)=&\int_{pr_{\chi}(\Gamma_1)}\omega
=\dfrac{1}{|G|}\sum_g\chi^{-1}(g)\int_{g\Gamma_1}\omega
\\
=&\dfrac{1}{|G|}\sum_g\chi^{-1}(g)\int_{\Gamma_1}(g^{-1})^*\omega
=\int_{\Gamma_1}\omega
\end{align*}

In the following, we give an explicit formula for $(c(pr_{\chi}(\Gamma_1)),\omega)$
for the classification  (1)--(2) of 
Theorem \ref{Shioda's classification}.
The following proposition give a relation between the period integrals for relative cycles 
and special values of the hypergeometric function.
\begin{proposition}
\label{relative integral and hgf}
Let $\alpha_1, \alpha_2, \alpha_3>0$ be real numbers. Then we have
\begin{align*}
&\int_{\Gamma_1}(\xi+\eta-1)^{\alpha_1-1}\xi^{\alpha_2-1}\eta^{\alpha_3-1}
d\xi d\eta
\\
=&\dfrac{1}{(\alpha_1+\alpha_2)(\alpha_1+\alpha_3)}F(1,1,\alpha_1+\alpha_2+\alpha_3;\alpha_1+\alpha_2+1,\alpha_1+\alpha_3+1;1)
\end{align*}
\end{proposition}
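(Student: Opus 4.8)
The region $\Gamma_1$ is the triangle with vertices $(1,0)$, $(0,1)$, $(1,1)$, i.e. the part of the unit square lying above the line $\xi+\eta=1$; the integrand has only integrable singularities along its edges (governed respectively by $\alpha_1,\alpha_2,\alpha_3>0$), so the integral converges absolutely. The plan is to evaluate it as an iterated integral, integrating in $\xi$ first. For fixed $\eta\in[0,1]$ the variable $\xi$ runs over $[1-\eta,1]$, and the substitution $\xi=1-\eta\tau$ (so that $\xi+\eta-1=\eta(1-\tau)$ and $\xi=1-\eta\tau$, $\tau\in[0,1]$) turns the inner integral into Euler's integral for ${}_2F_1$:
\[
\int_{1-\eta}^{1}(\xi+\eta-1)^{\alpha_1-1}\xi^{\alpha_2-1}\,d\xi
=\eta^{\alpha_1}\int_0^1(1-\tau)^{\alpha_1-1}(1-\eta\tau)^{\alpha_2-1}\,d\tau
=\frac{\eta^{\alpha_1}}{\alpha_1}\,{}_2F_1(1,1-\alpha_2;\alpha_1+1;\eta),
\]
the constant $1/\alpha_1$ being the Euler prefactor $\Gamma(\alpha_1+1)/(\Gamma(1)\Gamma(\alpha_1))$.

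Multiplying by $\eta^{\alpha_3-1}$ and integrating $\eta$ over $[0,1]$ term by term (legitimate by absolute convergence of the ${}_2F_1$ series on $[0,1)$ against the integrable weight $\eta^{\alpha_1+\alpha_3-1}$), the monomial $\eta^{k}$ contributes $1/(\alpha_1+\alpha_3+k)$; rewriting $1/(\alpha_1+\alpha_3+k)=(\alpha_1+\alpha_3)_k/[(\alpha_1+\alpha_3)(\alpha_1+\alpha_3+1)_k]$ collects the sum into a single ${}_3F_2$ at $1$:
\[
\int_{\Gamma_1}(\xi+\eta-1)^{\alpha_1-1}\xi^{\alpha_2-1}\eta^{\alpha_3-1}\,d\xi\,d\eta
=\frac{1}{\alpha_1(\alpha_1+\alpha_3)}\,F(1,1-\alpha_2,\alpha_1+\alpha_3;\alpha_1+1,\alpha_1+\alpha_3+1;1).
\]
This intermediate form already carries the factor $1/(\alpha_1+\alpha_3)$ and the lower parameter $\alpha_1+\alpha_3+1$ of the claimed answer.

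It remains to identify this with the stated expression, i.e. to prove the ${}_3F_2(1)$ identity converting the parameter set $(1,1-\alpha_2,\alpha_1+\alpha_3;\alpha_1+1,\alpha_1+\alpha_3+1)$ into $(1,1,\alpha_1+\alpha_2+\alpha_3;\alpha_1+\alpha_2+1,\alpha_1+\alpha_3+1)$, while turning the prefactor $1/\alpha_1$ into $1/(\alpha_1+\alpha_2)$. This is an instance of Thomae's transformation theory for convergent ${}_3F_2$'s at $1$ (the $S_3\times S_2$ parameter symmetries together with the basic two-term relation), and I expect this to be the main obstacle. The two series have \emph{different} parameter excesses ($\alpha_1+\alpha_2$ for the intermediate form versus $\alpha_1$ for the target), so the identification is not a single textbook substitution but a genuinely nontrivial member of the Thomae group; the labor is to select the correct word and to check that the accompanying $\Gamma$-factor prefactor collapses exactly to $\alpha_1/(\alpha_1+\alpha_2)$. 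I would pin this down by matching the Thomae invariants of the two parameter sets and then simplifying the resulting ratio of Gamma values.

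Finally, two sanity checks guard the bookkeeping: at $\alpha_1=\alpha_2=\alpha_3=1$ the intermediate form reads $\tfrac12\,F(1,0,2;2,3;1)=\tfrac12$ (the series truncates since $1-\alpha_2=0$), the stated form reads $\tfrac14\,F(1,1,3;3,3;1)=\tfrac12$, and both equal the area of $\Gamma_1$; more substantively, at $\alpha_1=\alpha_2=\alpha_3=\tfrac12$ the intermediate form evaluates in closed form to $4\log 2$, which matches the numerical value of the stated ${}_3F_2(1)$. Throughout, the analytic points — absolute convergence of the boundary integral, validity of the term-by-term integration, and convergence of the final ${}_3F_2$ at $1$ (which requires only $\alpha_1>0$) — are routine once $\alpha_1,\alpha_2,\alpha_3>0$ is assumed.
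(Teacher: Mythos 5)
Your reduction is correct as far as it goes: the iterated integration, the substitution $\xi=1-\eta\tau$, Euler's integral for ${}_2F_1$, and termwise integration in $\eta$ do yield
\begin{equation*}
\int_{\Gamma_1}(\xi+\eta-1)^{\alpha_1-1}\xi^{\alpha_2-1}\eta^{\alpha_3-1}\,d\xi\,d\eta
=\frac{1}{\alpha_1(\alpha_1+\alpha_3)}\,F(1,\,1-\alpha_2,\,\alpha_1+\alpha_3;\,\alpha_1+1,\,\alpha_1+\alpha_3+1;\,1),
\end{equation*}
and both of your sanity checks are accurate. The genuine gap is that the proof stops here: the identification of this series with the one in the statement is precisely the remaining mathematical content, and you leave it unproven, deferring to ``Thomae's transformation theory'' and to an invariant-matching computation you say you \emph{would} carry out. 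By your own assessment this is the main obstacle, and an unexecuted plan supported by numerics does not establish the proposition; what you have actually proven is a different (equivalent) identity. The step is fillable: apply the two-term Thomae relation
\begin{equation*}
{}_3F_2\!\left[\begin{matrix}a,\,b,\,c\\ d,\,e\end{matrix};1\right]
=\frac{\Gamma(d)\,\Gamma(e)\,\Gamma(s)}{\Gamma(a)\,\Gamma(s+b)\,\Gamma(s+c)}\,
{}_3F_2\!\left[\begin{matrix}d-a,\,e-a,\,s\\ s+b,\,s+c\end{matrix};1\right],
\qquad s=d+e-a-b-c,
\end{equation*}
once to your series (with $a=1$, $b=1-\alpha_2$, $c=\alpha_1+\alpha_3$, so $s=\alpha_1+\alpha_2$) and once to the target (with $a=b=1$, $c=\alpha_1+\alpha_2+\alpha_3$, so $s=\alpha_1$): both collapse to the same series ${}_3F_2[\alpha_1,\alpha_1+\alpha_2,\alpha_1+\alpha_3;\alpha_1+1,2\alpha_1+\alpha_2+\alpha_3;1]$, and the ratio of the two Gamma prefactors simplifies to exactly $\alpha_1/(\alpha_1+\alpha_2)$, which is what you need. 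Those two applications (with the positivity of each $s$ checked for convergence) must actually appear in the argument for it to be a proof.

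For comparison, the paper sidesteps the transformation theory entirely. It uses the single symmetric substitution $\xi=(1-s)/(1-st)$, $\eta=(1-t)/(1-st)$, which maps $\Gamma_1$ onto the unit square $\{0<s<1,\,0<t<1\}$ and turns the integrand into $(1-s)^{\alpha_1+\alpha_2-1}(1-t)^{\alpha_1+\alpha_3-1}(1-st)^{-\alpha_1-\alpha_2-\alpha_3}\,ds\,dt$; expanding $(1-st)^{-\alpha_1-\alpha_2-\alpha_3}$ binomially and integrating each $(st)^k$ term by Beta integrals produces the stated ${}_3F_2$ with the prefactor $1/((\alpha_1+\alpha_2)(\alpha_1+\alpha_3))$ in one stroke. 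The asymmetry of integrating in $\xi$ first is what creates your extra ${}_3F_2$ obstacle; a substitution preserving the $\alpha_2\leftrightarrow\alpha_3$ symmetry removes it.
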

\begin{proof}
By changing the variables
$$
\xi=\dfrac{1-s}{1-st}, \eta=\dfrac{1-t}{1-st},
$$
the domain $\Gamma_1$ of the integral are transformed into the domain
$$
\Gamma_1'=\{0<t_1<1,0<t_2<1\}.
$$
By changing the variable, we have
\begin{align*}
&\int_{\Gamma_1}(\xi+\eta-1)^{\alpha_1-1}\xi^{\alpha_2-1}\eta^{\alpha_3-1}
d\xi d\eta
\\
=&\int_{\Gamma'_1}(1-s)^{\alpha_1+\alpha_2-1}(1-t)^{\alpha_1+\alpha_3-1}(1-st)^{-\alpha_1-\alpha_2-\alpha_3}dsdt
\\
=&B(1,\alpha_1+\alpha_2)B(1,\alpha_1+\alpha_3)F(1,1,\alpha_1+\alpha_2+\alpha_3;\alpha_1+\alpha_2+1,\alpha_1+\alpha_3+1;1)
\end{align*}
\end{proof}
\subsection{First example}
We consider the case (1) in Theorem \ref{Shioda's classification},
$
(\alpha_0,\alpha_1,\alpha_2,\alpha_3)=(\alpha,1-\alpha, \beta,1-\beta),
$
where $\alpha,\beta \in \bold Q$ and $0<\alpha,\beta <1$.
We compute the integral
$$
\int_{\Gamma_1}
(\xi+\eta-1)^{-\alpha} \xi^{\alpha-1}\eta^{\beta-1}d\xi d\eta
$$
with
$\Gamma_1=\{\xi+\eta>1,\xi<1,\eta<1\}$.
By changing variable by
$\eta'=\dfrac{\xi+\eta-1}{\xi},\quad \xi'=\eta$
(i.e. $\xi=\dfrac{1-\xi'}{1-\eta'}, \eta=\xi'$), we have
\begin{align*}
\int_{\Gamma_1}
(\xi+\eta-1)^{-\alpha} \xi^{\alpha-1}\eta^{\beta-1}d\xi d\eta
=&\int_{\Gamma_1^*}\eta'^{-\alpha}\xi'^{\beta-1}
\dfrac{1}{1-\eta'}d\xi' d\eta'
\\
=&\int_{0}^1\eta'^{-\alpha}
\dfrac{1-\eta'^{\beta}}{\beta(1-\eta')}
 d\eta'
\end{align*}
where
$\Gamma_1^*=\{\eta'<\xi'<1,0<\eta'<1\}$.

We extend the above equality 
using analytic continuation using Pochhammer integral.
Let $\alpha\not\in \bold Z$ and $f(x)$ be a rational function of $\bold C$
without pole on $[0,1]\subset \bold R$. We define Pochhammer integral
by 
\begin{equation}
\label{Poch int}
\int_{P(0,1)}x^{\alpha}f(x)dx=
\int_{\epsilon}^1x^{\alpha}f(x)dx+
\dfrac{1}{\bold e(\alpha)-1}\int_{C_{\epsilon}}x^{\alpha}f(x)dx
\end{equation}
Here the path $C_{\epsilon}$ is defined by
$$
C_{\epsilon}:[0,1]\to \bold C:t\mapsto \epsilon\bold e(t).
$$
Here we choose the branch of $x^{\alpha}$ on $C_{\epsilon}$ to be
$\arg(x^\alpha)=2\pi \alpha t$ for the above parameter $t$.
Then the integral (\ref{Poch int}) is analytic function of $\alpha$
for $\alpha\not\in \bold Z$ and 
$$
\int_{0}^1x^{\alpha}f(x)dx=
\int_{P(0,1)}x^{\alpha}f(x)dx
$$
for $-1<\alpha$.
Thus we have the following theorem.

\begin{theorem}
Let $\alpha, \beta$ be real numbers such that $\alpha,\beta, \alpha-\beta\not\in \bold Z$.
We have the following identity:
\begin{align*}
\dfrac{1}{(-\alpha+1+\be)}
F(1, 1, 1+\be;2, -\alpha+2+\be;1)
=&\int_{P(0,1)}\eta^{-\alpha}
\dfrac{1-\eta^{\beta}}{\beta(1-\eta)}
 d\eta.
\end{align*}
\end{theorem}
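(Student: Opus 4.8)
The plan is to establish the identity first on the parameter range where both sides are classically defined, and then to spread it to the whole region $\{\alpha,\beta,\alpha-\beta\notin\bold Z\}$ by analytic continuation, using the Pochhammer regularization (\ref{Poch int}) to continue the right-hand side and the meromorphy of ${}_3F_2$ at unit argument to continue the left-hand side.

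First I would record the base identity on the open set $0<\alpha,\beta<1$. For such parameters every integral below converges absolutely, so the chain of substitutions carried out in the preceding subsection gives $\int_{\Gamma_1}(\xi+\eta-1)^{-\alpha}\xi^{\alpha-1}\eta^{\beta-1}\,d\xi\,d\eta=\int_0^1\eta^{-\alpha}\frac{1-\eta^{\beta}}{\beta(1-\eta)}\,d\eta$, while Proposition \ref{relative integral and hgf}, applied with $(\alpha_1,\alpha_2,\alpha_3)=(1-\alpha,\alpha,\beta)$ so that $\alpha_1+\alpha_2=1$, $\alpha_1+\alpha_3=1-\alpha+\beta$ and $\alpha_1+\alpha_2+\alpha_3=1+\beta$, evaluates the same integral as $\frac{1}{1-\alpha+\beta}F(1,1,1+\beta;2,2-\alpha+\beta;1)$. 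Since $\int_0^1=\int_{P(0,1)}$ whenever the ordinary integral converges (the property stated just after (\ref{Poch int}), here requiring $\alpha<1$ and $\alpha-\beta<1$), the asserted equality holds verbatim on $0<\alpha,\beta<1$.

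Next I would exhibit the common continuation explicitly and verify that the Pochhammer integral reproduces it. Expanding $\frac1{1-\eta}=\sum_{k\ge0}\eta^{k}$ and integrating termwise on $0<\alpha,\beta<1$ yields $\int_0^1\eta^{-\alpha}\frac{1-\eta^{\beta}}{\beta(1-\eta)}\,d\eta=\frac1\beta\sum_{k\ge0}\big(\frac1{k+1-\alpha}-\frac1{k+1+\beta-\alpha}\big)=\frac1\beta\big(\psi(1+\beta-\alpha)-\psi(1-\alpha)\big)$, with $\psi$ the digamma function; this is manifestly meromorphic in $(\alpha,\beta)$ and holomorphic on $\{\alpha,\beta,\alpha-\beta\notin\bold Z\}$. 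To see that $\int_{P(0,1)}$ computes exactly this function I would split the path at an interior point $c\in(0,1)$: on $[c,1]$ the integrand is holomorphic in $\eta$ (the apparent pole of $(1-\eta)^{-1}$ at $\eta=1$ is cancelled by $1-\eta^{\beta}$) and holomorphic in $(\alpha,\beta)$ away from $\beta=0$, so $\int_c^1$ needs no regularization, whereas on $[0,c]$ the factor $(1-\eta)^{-1}$ is holomorphic and one may legitimately separate the two monomials $\eta^{-\alpha}$ and $\eta^{\beta-\alpha}$ and apply (\ref{Poch int}) to each, obtaining functions analytic for $\alpha\notin\bold Z$ and for $\alpha-\beta\notin\bold Z$ respectively. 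Their sum is independent of $c$, agrees with the ordinary integral on the convergence range, and therefore equals $\frac1\beta(\psi(1+\beta-\alpha)-\psi(1-\alpha))$ throughout the region.

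It then remains to continue the left-hand side and to conclude. The series $F(1,1,1+\beta;2,2-\alpha+\beta;1)$ converges only for $\mathrm{Re}(\alpha)<1$, but ${}_3F_2$ at unit argument is meromorphic in its parameters (e.g. through its Euler-type integral representation), so $\frac{1}{1-\alpha+\beta}F(1,1,1+\beta;2,2-\alpha+\beta;1)$ is holomorphic on $\{\alpha,\beta,\alpha-\beta\notin\bold Z\}$. Both sides are thus holomorphic there and coincide on the real set $0<\alpha,\beta<1$; fixing a generic real $\beta$ and viewing each side as a holomorphic function of the single variable $\alpha$ on the connected set $\bold C\setminus(\bold Z\cup(\beta+\bold Z))$, agreement on the interval $(0,1)$ forces agreement for all admissible $\alpha$ by the one-variable identity theorem, and a second application in the variable $\beta$ yields the identity on the whole region. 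The hard part is the middle step: because for $\beta\notin\bold Z$ the integrand carries two distinct monodromies $\bold e(-\alpha)$ and $\bold e(\beta-\alpha)$ at $\eta=0$, a single loop with a single prefactor does not regularize both exponents at once, and one must check that the interior splitting indeed glues to a single well-defined meromorphic function represented by $\int_{P(0,1)}$; by comparison, continuing the left-hand side and invoking the identity theorem are routine.
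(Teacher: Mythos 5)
Your proposal is correct and follows essentially the same route as the paper: establish the identity for $0<\alpha,\beta<1$ by the change of variables $\eta'=(\xi+\eta-1)/\xi$, $\xi'=\eta$ together with Proposition \ref{relative integral and hgf} applied to $(\alpha_1,\alpha_2,\alpha_3)=(1-\alpha,\alpha,\beta)$, and then extend to $\alpha,\beta,\alpha-\beta\notin\mathbf{Z}$ by analytic continuation via the Pochhammer regularization (\ref{Poch int}). Your extra care with the two monodromies $\eta^{-\alpha}$ and $\eta^{\beta-\alpha}$ at $\eta=0$ (splitting the path and regularizing each monomial separately, plus the digamma evaluation) fills in details the paper leaves implicit, but the underlying argument is the same.
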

By the following proposition, the above integral expression gives an explicit formula
as an element in 
$\overline{\bold Q}+\overline{\bold Q}\log(\overline{\bold Q}^{\times})$.
\begin{proposition}
Let $\alpha=\dfrac{n}{m}, \beta=\dfrac{n'}{m}\in (0,1)$ is a rational number.
We choose a suitable branch of $\log$
and a suitable path $[0,1]$.
Then we have
\begin{enumerate}
\item
Let 
$c\in \bold C^{\times}$, $|c|\neq 1$.
We choose $\gamma\in \bold C$ such that $\gamma^{m} =c$. then
\begin{align*}
\int_0^1\dfrac{x^\alpha}{c-x}\dfrac{dx}{x}=
-\sum_{i=0}^{m-1}\bold e(-ni/m)\log(1-\dfrac{\bold e(i/m)}{\gamma})
(=\dfrac{1}{c\alpha} \ _2F_1(1,\alpha;\alpha+1;1/c))
\end{align*}
\item
(Gauss's digamma theorem)
\begin{align*}
\int_0^1\dfrac{x^{\beta}-x^\alpha}{1-x}\dfrac{dx}{x}=
-\sum_{i=1}^{m-1}(\bold e(-n'i/m)-\bold e(-ni/m))\log(1-\bold e(i/m))
\end{align*}
\end{enumerate}
\end{proposition}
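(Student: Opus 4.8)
The plan is to reduce both integrals to integrals of \emph{rational} functions by the substitution $x=y^m$, and then to evaluate the resulting rational integrals by partial fractions, each simple factor contributing a single logarithm. Throughout write $\zeta=\bold e(1/m)$ for a primitive $m$-th root of unity, so that $\bold e(i/m)=\zeta^i$, and recall $\alpha=n/m$, $\beta=n'/m$.

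For part (1), putting $x=y^m$ (so that $x^{\alpha}\tfrac{dx}{x}=x^{\alpha-1}dx=m\,y^{n-1}dy$ and $y$ again runs over $[0,1]$) turns the integral into
$$
\int_0^1\frac{x^{\alpha-1}}{c-x}\,dx=m\int_0^1\frac{y^{n-1}}{\gamma^m-y^m}\,dy .
$$
I would factor $\gamma^m-y^m=\prod_{i=0}^{m-1}(\gamma\zeta^i-y)$ and expand $\tfrac{y^{n-1}}{\gamma^m-y^m}$ in partial fractions; using $\zeta^{im}=1$, the residue at $y=\gamma\zeta^i$ is $\tfrac{(\gamma\zeta^i)^{n-1}}{-m(\gamma\zeta^i)^{m-1}}=-\tfrac1m\gamma^{n-m}\zeta^{in}$. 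Integrating each term via $\int_0^1\tfrac{dy}{y-\gamma\zeta^i}=\log\!\big(1-\zeta^{-i}/\gamma\big)$ and re-indexing $i\mapsto-i$ produces exactly the stated sum $-\sum_{i=0}^{m-1}\bold e(-ni/m)\log\!\big(1-\bold e(i/m)/\gamma\big)$, carrying the scalar $\gamma^{n-m}=c^{\alpha-1}$; tracking this prefactor together with the branch of $\log$ is precisely where the ``suitable branch'' hypothesis enters. The parenthetical ${}_2F_1$ identity serves as an independent check: for $|c|>1$ one expands $\tfrac1{c-x}=\sum_{k\ge0}x^kc^{-k-1}$ and integrates term by term to get $\sum_{k\ge0}\tfrac{c^{-k-1}}{\alpha+k}=\tfrac1{c\alpha}\,{}_2F_1(1,\alpha;\alpha+1;1/c)$.

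For part (2) the same substitution gives
$$
\int_0^1\frac{x^{\beta-1}-x^{\alpha-1}}{1-x}\,dx=m\int_0^1\frac{y^{n'-1}-y^{n-1}}{1-y^m}\,dy ,
$$
where now $1-y^m=\prod_{i=0}^{m-1}(1-\zeta^i y)$ has the root $y=1$ \emph{inside} the range of integration. The crucial point is that the numerator $y^{n'-1}-y^{n-1}$ also vanishes at $y=1$, so in the decomposition $\tfrac{y^{k-1}}{1-y^m}=\sum_{j}\tfrac{-\zeta^{jk}/m}{y-\zeta^j}$ the two $j=0$ residues cancel and the apparent pole at $y=1$ disappears, leaving only $j=1,\dots,m-1$. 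Integrating $\int_0^1\tfrac{dy}{y-\zeta^j}$, re-indexing $j\mapsto-j$, and using the elementary relation $1-\zeta^{-j}=-\zeta^{-j}(1-\zeta^j)$ to collapse $\log(1-\zeta^{-j})-\log(-\zeta^{-j})$ into $\log(1-\zeta^j)$ modulo $2\pi i$, yields the asserted formula $-\sum_{j=1}^{m-1}\big(\bold e(-n'j/m)-\bold e(-nj/m)\big)\log\!\big(1-\bold e(j/m)\big)$. One recognizes this as Gauss's closed form for $\psi(\alpha)-\psi(\beta)$, the difference of digamma values at the rational points $\alpha=n/m$ and $\beta=n'/m$.

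The routine part—residues, the geometric-series check, and the re-indexing of roots of unity—is straightforward; the genuine obstacle is the bookkeeping of the logarithm. Each $\int_0^1\tfrac{dy}{y-\rho}$ is defined only after a branch of $\log$ and a path avoiding $\rho$ are fixed, and both the collapse $\log(1-\zeta^{-j})-\log(-\zeta^{-j})\mapsto\log(1-\zeta^j)$ and the handling of the multiplicative prefactor in part (1) hold only for a compatible choice of these data. Making a single global branch-and-path choice that simultaneously validates every termwise integration—this is what the statement encodes in ``a suitable branch of $\log$ and a suitable path $[0,1]$''—is the step that demands the most care.
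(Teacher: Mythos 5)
Your proof follows the same route as the paper's: substitute $x=y^m$, expand in partial fractions, and integrate each simple pole into a logarithm (for part (2) the paper only says it ``follows from the first'', so your direct argument, with the observation that the $y=1$ pole cancels between the two terms, is a reasonable self-contained variant). Your residue computation is also correct. The genuine gap is your final step in part (1): the factor $\gamma^{n-m}=c^{\alpha-1}$ that your partial fractions produce cannot be ``tracked together with the branch of $\log$''. Choosing a different branch of $\log$, or a different path from $0$ to $1$ avoiding the poles, changes each term by an element of $2\pi i\bold Z$, hence changes the whole right-hand side additively by an element of $2\pi i\bold Z[\bold e(1/m)]$; no such change can produce a multiplicative factor $c^{\alpha-1}$. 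What your computation actually establishes is
$$
\int_0^1\frac{x^{\alpha}}{c-x}\frac{dx}{x}
=-c^{\alpha-1}\sum_{i=0}^{m-1}\bold e(-ni/m)\log\Bigl(1-\frac{\bold e(i/m)}{\gamma}\Bigr),
$$
which is not the identity as stated.

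In fact the identity as stated is false, and your calculation has uncovered an error that is also present in the paper's own proof: the expansion used there,
$\frac{m\xi^{n-1}}{c-\xi^{m}}=\sum_{i}\frac{\bold e(-i(n-1)/m)}{\gamma-\bold e(i/m)\xi}$,
holds only when $\gamma^{n-m}=1$; in general the right-hand side must carry the factor $\gamma^{n-m}$ (compare the two sides at the pole $\xi=\gamma\bold e(-i/m)$). Concretely, for $m=2$, $n=1$, $c=4$, $\gamma=2$ one has $\int_0^1 x^{-1/2}(4-x)^{-1}dx=\frac12\log 3$, while the stated sum is $-\log\frac12+\log\frac32=\log 3$; note that the parenthetical ${}_2F_1$ expression also equals $\frac12\log 3$, i.e.\ it agrees with the integral, not with the sum. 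So the correct move is to state part (1) with the extra factor $c^{\alpha-1}$, as your residues force, rather than to appeal to branch choices. Part (2), which is what the paper actually uses afterwards, is unaffected: there the relevant value is $c=\gamma=1$, so the prefactor is $1$, and both your direct derivation of Gauss's formula and the limiting argument implicit in the paper's ``follows from the first'' remain valid.
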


\begin{proof}
We have
\begin{align*}
\int_0^1\dfrac{x^\alpha}{c-x}\dfrac{dx}{x}=&
\int_0^1\dfrac{m\xi^{n-1}}{c-\xi^{m}}d\xi
=
\int_0^1\sum_{i=0}^{m-1}\dfrac{\bold e(-i(n-1)/m)}{\gamma-\bold e(i/m)\xi}d\xi
\\
=&
-\sum_{i=0}^{m-1}\bold e(-ni/m)\log(1-\dfrac{\bold e(i/m)}{\gamma})
\end{align*}
The second statement follows from the first.
\end{proof}

\subsection{Permutations of exponent indices}
Let $\Gamma_i$ ($i=1,2,3$) be chains of $\bold A^2(\bold C)$ defined by
\begin{align}
\label{def of Gamma}
&\Gamma_1=\{(\xi,\eta)\in \bold R^2
\mid 0\leq \xi\leq 1, 0\leq \eta\leq 1,1\leq  \xi+\eta\},
\\
\nonumber
&\Gamma_{2}=\{(\xi,\eta)\in \bold R^2
\mid 0\leq \xi\leq 1, -1 \leq \eta\leq 0, 
0 \leq \xi+\eta\leq 1\}
\\
\nonumber
&\Gamma_{3}=\{(\xi,\eta)\in \bold R^2
\mid -1\leq \xi\leq 0, 0 \leq \eta\leq 1, 
0 \leq \xi+\eta\leq 1\},
\end{align}
with the standard orientations.
We set $\xi=\eta', \eta=1-\xi'-\eta'$ and $\xi=1-\xi''-\eta'', \eta=\xi''$. 
Then we have

\begin{align*}
&\int_{\Gamma_{2}}(\xi+\eta-1)^{\alpha_1-1}\xi^{\alpha_2-1}\eta^{\alpha_3-1}
d\xi d\eta
\\
=
&(-1)^{\alpha_1+\alpha_3}\int_{\Gamma_1}(\xi')^{\alpha_1-1}(\eta')^{\alpha_2-1}(\xi'+\eta'-1)^{\alpha_3-1}
d\xi d\eta
\end{align*}
\begin{align*}
&\int_{\Gamma_{3}}(\xi+\eta-1)^{\alpha_1-1}\xi^{\alpha_2-1}\eta^{\alpha_3-1}
d\xi d\eta
\\
=
&(-1)^{\alpha_1+\alpha_2}\int_{\Gamma_1}(\eta'')^{\alpha_1-1}(\xi''+\eta''-1)^{\alpha_2-1}(\xi'')^{\alpha_3-1}
d\xi d\eta
\end{align*}
Thus the integral for $(\alpha_3,\alpha_1,\alpha_2)$ and 
$(\alpha_2,\alpha_3,\alpha_1)$ on $\Gamma_1\subset X_p$
is reduced to the integral for $(\alpha_1,\alpha_2,\alpha_3)$ on $\Gamma_{2}$ and $\Gamma_{3}$.

\begin{table}[H]
 \begin{tabular}{|c|c|c|c|}
\hline 
& $\Gamma_1$&   reduced to $\Gamma_{2}$ & reduced to $\Gamma_{3}$  
\\
\hline\hline 
$X_m$ &
$(\beta,-\alpha,\alpha)$  &$(\alpha,\beta,-\alpha)$ &$(-\alpha,\alpha,\beta)$  
\\   \hline 
$X_{2m}$ &
$(2\alpha,-\alpha,-\alpha+1/2)$  &$(-\alpha+1/2,2\alpha,-\alpha)$ &$(-\alpha,-\alpha+1/2,2\alpha)$ 
\\ \hline
$X_{3m}$ &
$(3\alpha,-\alpha+1/3,-\alpha+2/3)$  & $(-\alpha+2/3,3\alpha,-\alpha+1/3)$ &$(-\alpha+1/3,-\alpha+2/3,3\alpha)$ 
\\\hline
$X_{4m}$ &
$(4\alpha,-\alpha+1/4,-\alpha+3/4)$  & $(-\alpha+3/4,4\alpha,-\alpha+1/4)$ &$(-\alpha+1/4,-\alpha+3/4,4\alpha)$ 
\\\hline
\end{tabular}
\end{table}

We have a symmetry on 
$(\alpha_1,\alpha_2,\alpha_3)$ and
$(\alpha_1,\alpha_3,\alpha_2)$.
Therefore
$(X_{2m},\Gamma_{3})$-case is reduced to 
$(X_{2m},\Gamma_{2})$-case by the shifting $\alpha\mapsto\alpha-1/2$.
Similarly, $(X_m,\Gamma_{2})$-case is reduced to 
$(X_m,\Gamma_{3})$-case.
The cases $(X_m,\Gamma_{3})$ is computed in the previous example.
As for the relation with Watson's formula, see \cite{AOT}.
We set $\alpha_0=(\alpha_1\alpha_2\alpha_3)^{-1}$.
Let $(\alpha_{i_0},\alpha_{i_1},\alpha_{i_2},\alpha_{i_3})$
be a quadruple obtained by a permutation of 
$(\alpha_{0},\alpha_{1},\alpha_{2},\alpha_{3})$.
Then $\alpha'=(\alpha_{i_1},\alpha_{i_2},\alpha_{i_3})$-part is also generated by algebraic cycles if 
$\alpha=(\alpha_{1},\alpha_{2},\alpha_{3})$-part is.
The corresponding period integral is an easy linear combination of
periods corresponding to permutations of $(\alpha_{1},\alpha_{2},\alpha_{3})$.

\section{Algebraic cycles and one forms with constant residues}
\subsection{Algebraic cycles on Fermat surfaces}
As for the case (1)--(2) in Theorem \ref{Shioda's classification}, 
algebraic cycles generating the Hodge classes
are given as follows (see \cite{AS}).
We define varieties $Z^{(1)}_{i,j}$,
$Z^{(2)}_{i\pm}$,
$Z^{(3)}_{i,j,k}$ and $Z^{(4)}_{i,k,\pm}$ by
\begin{align*}
&Z^{(1)}_{i,j}:w=\bold e(i/m+1/2m),\quad  u=\bold e(j/m+1/2m)v,\quad
(0\leq i,j \leq m-1)
\\
&Z^{(2)}_{i\pm}:w^2=(\mp 1)^{1/m}\bold e(i/m)2^{1/m}uv,\quad u^m\pm v^m=1,
\quad
(0\leq i\leq m-1)
\\
&Z^{(3)}_{i,j,k}:w^{3}=3^{1/m} \bold e((i+j)/3m+k/m+1/2m) u v,
\quad \omega^i u^m +\omega^{j} v^m=1
\\
&\qquad  (0\leq i,j \leq 2, 0\leq k\leq m-1)
\\
&Z^{(4)}_{i,k,\pm}:w^4=\bold e((k+4i)/4m)(2\sqrt{2})^{1/m}uv,\quad
\\
&\qquad u^{2m}+\bold e(k/2)v^{2m}\mp \bold e(k/4)\sqrt{2}u^mv^m=\pm 1,
\quad (0\leq i \leq m, 0\leq k\leq 3)
\end{align*}
\begin{proposition}
We have the following inclusions
$$
Z^{(1)}_{i,j}\subset X_m,\quad Z^{(2)}_{\pm,i}\subset X_{2m}, \quad
Z^{(3)}_{i,j,k}\subset X_{3m},\quad
Z^{(4)}_{i,k,\pm}\subset X_{4m}
$$
\end{proposition}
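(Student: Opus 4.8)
The plan is to verify each inclusion by direct substitution. Each $Z^{(\bullet)}$ is cut out inside the relevant affine space by two equations, and I would check that every point satisfying those equations also satisfies the Fermat equation $u^{km}+v^{km}-1-w^{km}=0$ of the ambient $X_{km}$. Throughout I would use $\bold e(x)=e^{2\pi i x}$, so that $\bold e(n)=1$ for $n\in\bold Z$, $\bold e(1/2)=-1$, and $\omega=\bold e(1/3)$; the fractional exponents appearing in the $w$-coordinates are arranged precisely so that raising to the $m$-th power produces these roots of unity and collapses the radicals, e.g. $((2\sqrt2)^{1/m})^m=2\sqrt2$.

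For $Z^{(1)}_{i,j}$ I would raise $w=\bold e(i/m+1/2m)$ to get $w^m=\bold e(i+1/2)=-1$ and raise $u=\bold e(j/m+1/2m)v$ to get $u^m=\bold e(j+1/2)v^m=-v^m$, so $u^m+v^m=0$ and hence $u^m+v^m-1-w^m=0-1-(-1)=0$. For $Z^{(2)}_{i\pm}$ I would compute $w^{2m}=(\mp1)\bold e(i)\,2\,(uv)^m=\mp2u^mv^m$ and square $u^m\pm v^m=1$ to obtain $u^{2m}+v^{2m}=1\mp2u^mv^m=1+w^{2m}$. The case $Z^{(3)}_{i,j,k}$ is the cubic analogue: setting $A=\omega^iu^m$ and $B=\omega^jv^m$, the second relation reads $A+B=1$, while $A^3=u^{3m}$, $B^3=v^{3m}$, and $w^{3m}=3\,\bold e((i+j)/3+k+1/2)\,u^mv^m=-3\omega^{i+j}u^mv^m=-3AB$; then the identity $A^3+B^3=(A+B)^3-3AB(A+B)=1-3AB$ gives $u^{3m}+v^{3m}=1+w^{3m}$.

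The one case requiring genuine care, and the step I expect to be the main obstacle, is $Z^{(4)}_{i,k,\pm}$, both for the coupled $\pm/\mp$ signs and because the required identity is a quartic factorization rather than a binomial expansion. Here I would set $P=u^m$ and $R=\bold e(k/4)v^m$, so that $R^4=v^{4m}$ and, using $\bold e(k/2)=\bold e(k/4)^2$, the defining relation becomes $P^2\mp\sqrt2\,PR+R^2=\pm1$. Raising $w^4$ to the $m$-th power gives $w^{4m}=\bold e(k/4)\,(2\sqrt2)\,u^mv^m=2\sqrt2\,PR$. Writing $S=P^2+R^2$ and $T=\sqrt2\,PR$, the classical factorization $P^4+R^4=(P^2+R^2)^2-2P^2R^2=S^2-T^2$ yields
$$
u^{4m}+v^{4m}-1-w^{4m}=S^2-T^2-1-2T=S^2-(T+1)^2=(S-T-1)(S+T+1).
$$
Finally I would note that the defining relation reads $S\mp T=\pm1$, i.e.\ $S-T-1=0$ for the upper signs and $S+T+1=0$ for the lower signs, so in either case one of the two factors vanishes and the Fermat equation of $X_{4m}$ holds. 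The only subtlety is keeping the two sign choices coupled consistently so that the vanishing factor matches the chosen branch; once that bookkeeping is fixed, all four verifications are routine.
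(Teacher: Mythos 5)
Your proposal is correct and follows essentially the same route as the paper: both reduce to the variables $x=u^m$, $y=v^m$, $z=w^m$ and verify the Fermat equation of the ambient surface by an algebraic identity, and your quartic case rests on exactly the factorization $(S-T-1)(S+T+1)$ that the paper exhibits as $(x^2+(-1)^ky^2-i^k\sqrt{2}xy-1)(x^2+(-1)^ky^2+i^k\sqrt{2}xy+1)$. The only cosmetic difference is in the lower-degree cases, where you square (resp.\ substitute $A+B=1$ into $A^3+B^3=(A+B)^3-3AB(A+B)$) instead of writing out the paper's explicit factorizations into two (resp.\ three) linear factors.
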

\begin{proof}
We set $x=u^m, y=v^m, z=w^m$. Using relations
\begin{align*}
z&=-1,\quad y+y=0\qquad(\text{ on }Z^{(1)}_{i,j})
\\
z^2&=\mp 2x y,\quad x \pm y=1\qquad(\text{ on }Z^{(2)}_{\pm,i})
\\
z^3&=-3 \omega^{i+j} x y,
\quad \omega^i x +\omega^{j} y=1
\qquad(\text{ on }Z^{(3)}_{i,j,k})
\\
z^4&=2\sqrt{2} i^{k} x y,
\quad x +(-1)^{k} y \mp i^k xy \mp 1=0
\qquad(\text{ on }Z^{(4)}_{i,k,\pm})
\end{align*}
we have
\begin{align*}
&x^2+y^2-z=1
\quad(\text{ on }Z^{(1)}_{i,j})
\\
&x^2+y^2-z^2-1=x^2+ y^2 \pm 2xy -1
=(x\pm y-1)(x\pm y+1)
\quad(\text{ on }Z^{(2)}_{\pm,i})
\\
& x^3+y^3-z^3-1=
x^3+y^3+3\omega^{i+j} xy-1
\\
&=
(\omega^ix+\omega^j y-1)
(\omega^{i+1}x+\omega^{j-1} y-1)
(\omega^{i-1}x+\omega^{j+1} y-1)=0,
\quad(\text{ on }Z^{(3)}_{i,j,k})
\\
&x^4+y^4-z^4-1= 
x^4+y^4-i^k2\sqrt{2}xy-1
 \\
&=
(x^2+(-1)^ky^2-i^k \sqrt{2}xy-1) 
(x^2+(-1)^ky^2+i^k \sqrt{2}xy+1)=0,
\quad (\text{ on }Z^{(4)}_{i,k,\pm})
\end{align*}
Thus we have the required inclusions.
\end{proof}
By the above proposition, we have
\begin{align*}
&Z_1=\bigcup_{i,j} Z^{(1)}_{i,j}\subset X_m,\quad
Z_2=\bigcup_{i} Z^{(2)}_{i,\pm}\subset X_{2m},
Z_3=\bigcup_{i,j,k} Z^{(3)}_{i,j,k}\subset X_{3m},\quad
Z_4=\bigcup_{i,k} Z^{(4)}_{i,k,\pm}\subset X_{4m}
\end{align*}

Then the varieties $Z$ is stable under the action 
of $G_{pm}=\mu_{pm}^3$
given as (\ref{Fermat abel action}).
We set $x=u^m, y=v^m, z=w^m$ and for an element of $\alpha \in \dfrac{1}{pm}\bold Z$
and use a notation $x^{\alpha}=u^{m\alpha}$, etc.
We define characters $\phi_p$ of $G_{pm}$ by setting $\phi_p=\chi_{\bold a}$, where
$$
\bold a=\begin{cases}
(\alpha,-\alpha,\beta,-\beta)
&\quad(p=1)
\\
(2\alpha,1-\alpha,-\alpha+\dfrac{1}{2},\dfrac{1}{2})
&\quad(p=2)
\\
(3\alpha,1-\alpha,-\alpha+\dfrac{1}{3},-\alpha+\dfrac{2}{3})
&\quad(p=3)
\\
(4\alpha,1-2\alpha,-\alpha+\dfrac{1}{4},-\alpha+\dfrac{3}{4})
&\quad(p=4)
\end{cases}
$$
Here the Kummer character $\chi_{\bold a}$ is defined by
(\ref{Kummer char on Fermat})
The following proposition is proved in \cite{AS}.
\begin{proposition}[\cite{AS}]
\label{Aoki}
The $\phi_p$ parts of the cohomology classes of the components in 
$Z_p$ generates $H^2(X_{pm})(\phi_p)$.
In other words, the map
$H^2_{Z_p}(X_{pm})(\phi_p)
 \to 
H^2(X_{pm})(\phi_p)
$ is surjective.
As a consequence, the map
$$
H^2(X_{pm})(\phi_p)
 \to 
H^2(X_{pm}-Z_p)(\phi_p)
$$
is the zero map
by the localization exact sequence.
\end{proposition}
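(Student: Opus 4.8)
The plan is to reduce the three assertions to a single non-vanishing statement and then to establish that by an explicit period computation. First I would record that, as recalled before Theorem~\ref{Shioda's classification}, the character space $H^2(X_{pm})(\phi_p)$ is one-dimensional. Consequently the map
$$
H^2_{Z_p}(X_{pm})(\phi_p)\to H^2(X_{pm})(\phi_p)
$$
is surjective if and only if its image is non-zero, that is, if and only if the $\phi_p$-component of the cycle class of $Z_p$ does not vanish; this is exactly the content of the first two sentences. The third sentence is then purely formal: since taking the $\phi_p$-isotypic part is exact, the localization sequence yields an exact row
$$
H^2_{Z_p}(X_{pm})(\phi_p)\to H^2(X_{pm})(\phi_p)\to H^2(X_{pm}-Z_p)(\phi_p),
$$
so once the first arrow is surjective the second is forced to be the zero map. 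Everything therefore rests on showing that the $\phi_p$-component of $[Z_p]$ is non-zero in this one-dimensional space.

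To detect this I would pair $[Z_p]$ against a de Rham generator $\theta$ of the dual character line $H^2_{dR}(X_{pm})(\phi_p^{-1})$, so that non-vanishing becomes the numerical assertion $\langle [Z_p],\theta\rangle\neq 0$. In the coordinates of $\pi''$ the form $\theta$ is again of the type $(\xi+\eta-1)^{a}\xi^{b}\eta^{c}\,d\xi\,d\eta$, and under the Hodge condition it represents a class of type $(1,1)$, which is exactly the Hodge type against which an algebraic curve class pairs non-trivially; concretely $\langle [Z_p],\theta\rangle=\int_{Z_p}\theta$. The mechanism I would use to compute it is precisely the ``one-forms with constant residues'' of this section: along each component of $Z_p$ the two-form $\theta$ has a residue that is an explicit algebraic one-form, and the pairing becomes a sum of periods of these residue forms over the components. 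Because the cohomology map is $G_{pm}$-equivariant, the projector $pr_{\phi_p}$ turns this into a finite weighted sum in which one integrates a fixed residue one-form over a single representative of each orbit, weighted by the value of $\phi_p$ prescribed by the roots of unity $\bold e(\cdots)$ occurring in the defining equations of the $Z^{(p)}$. Invoking the factorizations of $x^{p}+y^{p}-z^{p}-1$ established in the preceding proposition, each residue period reduces to a standard Beta-type integral, equivalently a Jacobi sum.

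The hard part will be the evaluation of this weighted sum and the proof that it does not vanish. Two features must be controlled simultaneously. First, the character twists: the specific normalizing roots of unity in the equations of $Z^{(1)}_{i,j}$, $Z^{(2)}_{i\pm}$, $Z^{(3)}_{i,j,k}$ and $Z^{(4)}_{i,k,\pm}$ are arranged precisely so that the orbit sum lands in the character $\phi_p$ rather than some neighbouring character, and this must be verified from the explicit data. Second, the resulting finite character sum, which I expect to collapse to a product of Gauss sums in the arithmetic realization, or to a non-zero product of $\Gamma$-factors in the archimedean realization. Non-vanishing then follows from the classical fact that Gauss sums, and the relevant Jacobi sums, are non-zero. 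The genuinely delicate point is the combinatorial bookkeeping needed to rule out cancellation in the orbit sum; this is where the precise normalization of the cycles $Z^{(p)}$ in the statement is essential, and it is the step I would expect to occupy most of the work. The full computation is carried out in~\cite{AS}.
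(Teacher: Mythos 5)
Your proposal matches the paper's treatment of this statement: the paper gives no argument of its own, importing the result wholesale from \cite{AS}, and your write-up likewise defers the one essential point --- the non-vanishing of the $\phi_p$-component of the cycle classes of the components of $Z_p$ --- to the computation in \cite{AS}. The formal scaffolding you supply (one-dimensionality of $H^2(X_{pm})(\phi_p)$ reducing surjectivity to non-vanishing of a single component's class, exactness of the $\phi_p$-isotypic projection, and the localization sequence forcing the restriction map to $H^2(X_{pm}-Z_p)(\phi_p)$ to vanish) is correct and is exactly what the paper leaves implicit.
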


\subsection{Algebraic cycles and differential forms}
Let $p=1,2,3$. 
We express the de Rham cohomology $H^2(X_{pm})(\phi_p)$
by that of an open set $X^0_p$ defined by
$$
X_{pm}^0=\{(u,v,w)\in X_{pm}\mid uvw\neq 0\}
$$
By branch condition, we have
$$
H^2(X_{pm})(\phi_p)
\simeq H^2(X_{pm}^0)(\phi_p),\quad
H^2(X_{pm}-Z_p)(\phi_p)
\simeq H^2(X_{pm}^0-Z_p)(\phi_p).
$$
Let $\Omega=\Omega_p \in \Gamma(X_{pm}^0,\Omega^2)(\phi_p)$ 
be a closed form whose cohomology class 
generates 
$H^2(X_{pm})(\phi_p)$.
We set $U_p=X_{pm}^0-Z_p$ and the inclusion $U_p \to X_{pm}^0$ is denoted by $j$.
We consider the following diagram:
$$
\begin{matrix}
&\Gamma(X_{pm}^0,\Omega^1_{X_{pm}})(\phi_p) &\xrightarrow{d}&
\Gamma(X_{pm}^0,\Omega_{X_{pm}}^2)(\phi_p)
\\
&
\phantom{j^*}\downarrow j^*
& &
\phantom{j^*}\downarrow j^*
\\
&\Gamma(X_{pm}^0,\Omega_{X_{pm}}^1(\log Z_p))(\phi_p) &\xrightarrow{d}&
\Gamma(X_{pm}^0,\Omega_{X_{pm}}^2(\log Z_p))(\phi_p)
\\
&\operatorname{res}\downarrow\phantom{\operatorname{res}} 
& & \phantom{\operatorname{res}}\downarrow\operatorname{res}
\\
&
\oplus_{Z_p^{(i)}\subset Z_p}\Gamma(X^0_p\cap Z^{(i)}_p,\Cal O)(\phi_p) &\xrightarrow{d}&
\oplus_{Z_p^{(i)}\subset Z_p}\Gamma(X^0_p\cap Z^{(i)}_p,\Omega^1)(\phi_p)
\end{matrix}
$$

By Proposition \ref{Aoki}, there exists an element 
$\psi\in \Gamma(X_{pm}^0,\Omega_{X_{pm}}^1(\log(Z_p))(\phi_p)$ such that
$$
d\psi=j^*\Omega.
$$
By the above diagram, $res(\psi)$ is a locally constant function.
Vice versa, if $res(\psi)$ of a one form $\psi$ is a constant function 
on each component, then $\Omega=d\psi$ is a closed form and
an image under the map
$$
j^*:\maketitle
\Gamma(X_{pm}^0,\Omega_{X_{pm}}^2)(\phi_p)
\to \Gamma(X_{pm}^0,\Omega_{X_{pm}}^2(\log Z_p))(\phi_p)
$$
Moreover, if the class
$$
[\Omega]\in H^2(X_{pm}^{0})(\phi_p)\simeq H^2_{Z_P}(X_{pm}^{0})(\phi_p)
$$ 
is not zero,
it becomes a base.
In the next subsection, we construct $\psi$ satisfying the above equation.



\subsection{One forms $\psi$ with constant residues along $Z_{p}$}

For $p=1,2,3,4$, we consider varieties 
$$
X_{p}=
Spec(\bold C[x,y]/(x^p+y^p-1-z^p))
$$
and consider a sequence of morphisms:
\begin{equation}
\label{three varieties}
\begin{matrix}
X_{pm} &\xrightarrow{\pi} &X_{p} &\xrightarrow{\pi'} &\bold A^2
\\
(u,v,w)&\mapsto & (x,y,z)=(u^m,v^m,w^m) & 
\\
& & (x,y,z)&\mapsto & (\xi,\eta)=(x^p,y^p)
\end{matrix}
\end{equation}
We define one form $\psi_0$ on $X_{pm}$ by
$$
\psi_0=
\begin{cases}
(x+y-1)^\beta\dfrac{dx+dy}{x+y},
&\quad\text{ on }X_m
\\
\dfrac{dx+dy}{x+y-1}-\dfrac{dx-dy}{x-y-1},
&\quad\text{ on }X_{2m}
\\
\sum_{1\leq i,j \leq 3}\omega^{2i+j}\dfrac{\omega^idx+\omega^jdy}
{\omega^ix+\omega^jy-1},
&\quad\text{ on }X_{3m}
\\
\sum_{0\leq k \leq 3}
i^kd\log\big(x^2+(i^{k}y)^2+\sqrt{2}x(i^{k}y)+1\big)
\\
\quad-\sum_{0\leq k \leq 3}
i^kd\log\big(x^2+(i^{k}y)^2-\sqrt{2}x(i^{k}y)-1\big),
&\quad\text{ on }X_{4m}
\end{cases}
$$
and rational functions $f$ by
$$
f=\begin{cases}
\dfrac{y}{x},
&\quad\text{ on }X_m
\\
\dfrac{(x^2+y^2-1)^2}{x^2y^2},
&\quad\text{ on }X_{2m}
\\
\dfrac{(x^3+y^3-1)^3}{x^3y^3},
&\quad\text{ on }X_{3m}
\\
\dfrac{(x^4+y^4-1)^4}{x^4y^4}.
&\quad\text{ on }X_{4m}
\end{cases}
$$
Since
$f|_{Z_p}$ is a constant function,
the residue of a differential form 
$\psi=f^\alpha\psi_0$
is constant on each component of $Z_p$.
Using relations
$d\psi_0=0$ and
$$
df\wedge \psi_0=
\begin{cases}
-(x+y-1)^\beta\dfrac{dx\wedge dy}{x^2},
&\quad\text{ on }X_m
\\
-4\dfrac{(x^2+y^2-1)dx\wedge dy}{x^3y^2},
&\quad\text{ on }X_{2m}
\\
-\dfrac{27(x^3+y^3-1)^2dx\wedge dy}{x^3y^2},
&\quad\text{ on }X_{3m}
\\
-\dfrac{64\sqrt{2}(x^4+y^4-1)^3dx\wedge dy}{x^4y^2},
&\quad\text{ on }X_{4m}
\end{cases}
$$
we have
$$
d\psi=\alpha f^{\alpha-1}df\wedge\psi_0
=\begin{cases}
-\alpha
y^{\alpha-1}
x^{-\alpha-1}
(x+y-1)^{\beta} 
dx\wedge dy,
&\quad\text{ on }X_m
\\
-4\alpha
(x^2+y^2-1)^{2\alpha-1} 
x^{-2\alpha-1}
y^{-2\alpha}
dx\wedge dy,
&\quad\text{ on }X_{2m}
\\
-27\alpha
(x^3+y^3-1)^{3\alpha-1} 
x^{-3\alpha}
y^{-3\alpha+1}
dx\wedge dy,
&\quad\text{ on }X_{3m}
\\
-64\sqrt{2}\alpha
(x^4+y^4-1)^{4\alpha-1} 
x^{-4\alpha}
y^{-4\alpha+2}
dx\wedge dy,
&\quad\text{ on }X_{4m}.
\end{cases}
$$
By setting $x^p=\xi, y^p=\eta$, we have
the following multi valued differential form on $\bold A^2$:
$$
\Omega=d\psi=
\begin{cases} 
-\alpha
\eta^{\alpha-1}
\xi^{-\alpha-1}
(\xi+\eta-1)^{\beta} 
d\xi\wedge d\eta,
&\quad\text{ on }X_m
\\
-\alpha(\xi+\eta-1)^{2\alpha-1} 
\xi^{-\alpha-1}
\eta^{-\alpha-1/2}
d\xi\wedge d\eta,
&\quad\text{ on }X_{2m}
\\
-3\alpha
(\xi+\eta-1)^{3\alpha-1} 
\xi^{-\alpha-2/3}
\eta^{-\alpha-1/3}
d\xi\wedge d\eta,
&\quad\text{ on }X_{3m}
\\
-4\sqrt{2}\alpha
(\xi+\eta-1)^{4\alpha-1} 
\xi^{-\alpha-3/4}
\eta^{-\alpha-1/4}
d\xi\wedge d\eta,
&\quad\text{ on }X_{4m}.
\end{cases}
$$
Then the integral $\displaystyle\int_{\Gamma_i}\Omega$
converges, if $\alpha$ and $\beta$ are real numbers and
satisfy
the following conditions.
\begin{table}[H]
 \begin{tabular}{|c|c|c|c|c|}
\hline 
& $\Gamma_1$ & $\Gamma_{2}$ & $\Gamma_{3}$ 
\\
\hline\hline 
$X_m$ 
&$-1+|\alpha|<\beta$ &  &
\\   \hline 
$X_{2m}$ 
&$0<\alpha$  &$0<\alpha<1/4$ & 
\\ \hline
$X_{3m}$ 
&$0<\alpha$   & $-1/3<\alpha<1/2$ &$-1/6<\alpha<1/3$  
\\\hline
$X_{4m}$ 
&$0<\alpha$  &$-1/4<\alpha<1/2$ &$-1/12<\alpha<1/4$ 
 \\\hline
\end{tabular}
\caption{Convergent condtion}
\label{conv cond}
\end{table}

\section{Hypergeometric identities for $\Gamma_1$.}

\subsection{Blowing up and Stokes' formula}

For a real number $\alpha$, we set 
$(-1)^{\alpha}=\bold e(\alpha/2)$. Let
$\Gamma_i$ and be ($i=1,2,3$) topological chains defined in 
(\ref{def of Gamma}).
We define a topological cycle 
$\Gamma''_1, \Gamma''_{2},\Gamma''_{3}$ on $X_{pm}$
for $p=1,2,3,4$ by
\begin{align}
\label{def of Gamma prpr}
&\Gamma''_1=\bigg\{(u,v,w)\in X_{pm}\mid
\pi'\circ\pi(u,v,w)\in\Gamma_1, u,v,w\in \bold R_+\bigg\},
\\
\nonumber
&\Gamma''_{2}=\bigg\{(u,v,w)\in X_{pm}\mid
\pi'\circ\pi(u,v,w)\in \Gamma_{2}, 
w,v \in (-1)^{1/pm}\bold R_+,
u\in \bold R_+
\bigg\},
\\
\nonumber
&\Gamma''_{3}=\bigg\{(u,v,w)\in X_{pm}\mid
\pi'\circ\pi(u,v,w)\in \Gamma_{3}, 
w,u \in (-1)^{1/pm}\bold R_+,
v\in \bold R_+
 \bigg\}.
\end{align}
Here $\pi'\circ\pi$ is a morphism defined in (\ref{three varieties}).
The image $\pi(\Gamma''_i)$ of $\Gamma''_i$ under the map $\pi$ 
defined in (\ref{three varieties})
is denoted by $\Gamma_i'$. 
Then we have the following diagram:
$$
\begin{matrix}
X_{pm} &\xrightarrow{\pi} &X_{p} &\xrightarrow{\pi'} &\bold A^2
\\
\cup & & \cup& & \cup
\\
\Gamma_i'' & & \Gamma_i' & & \Gamma_i
\end{matrix}
$$
We set $B_y=\{x=(-1)^{1/p}y\}, B_x=\{y=(-1)^{1/p}x\}, B_{2}=\{x=1\}$ and $B_{3}=\{y=1\}$.
Then we have
\begin{align*}
&\partial \Gamma'_1 \subset \{x^p+y^p=1\}\cup B_{2}\cup B_{3},
\\
&\partial \Gamma'_{2} \subset B_x\cup B_{2}\cup \{y=0\}.
\\
&\partial \Gamma'_{3} \subset B_y\cup \{x=0\}\cup B_{3},
\end{align*}

In the last section, we have a relation $d\psi=j^*\Omega$ on $X_{p}$.
Since the pole of the differential form $\psi_p$ passes through the singular
points 
$$
p_{2}=\{(x,y)=(1,0)\},\quad
p_{3}=\{(x,y)=(0,1)\}
$$
in $\Gamma'_{2}, \Gamma'_{3}$,
we can not apply Stokes' formula for $\Gamma'_i$ in $X_p$. Thus we consider the blowing up 
$b:\widetilde{X_p} \to X_p$
of the variety $X_p$
with the center $B'\cap Z'$. Let $E_{2}$ and $E_{3}$ be the exceptional divisors over the points $p_{2}$ and
$p_{3}$, respectively.

\vskip 0.3in
\hskip 0.3in\includegraphics[width=5cm]{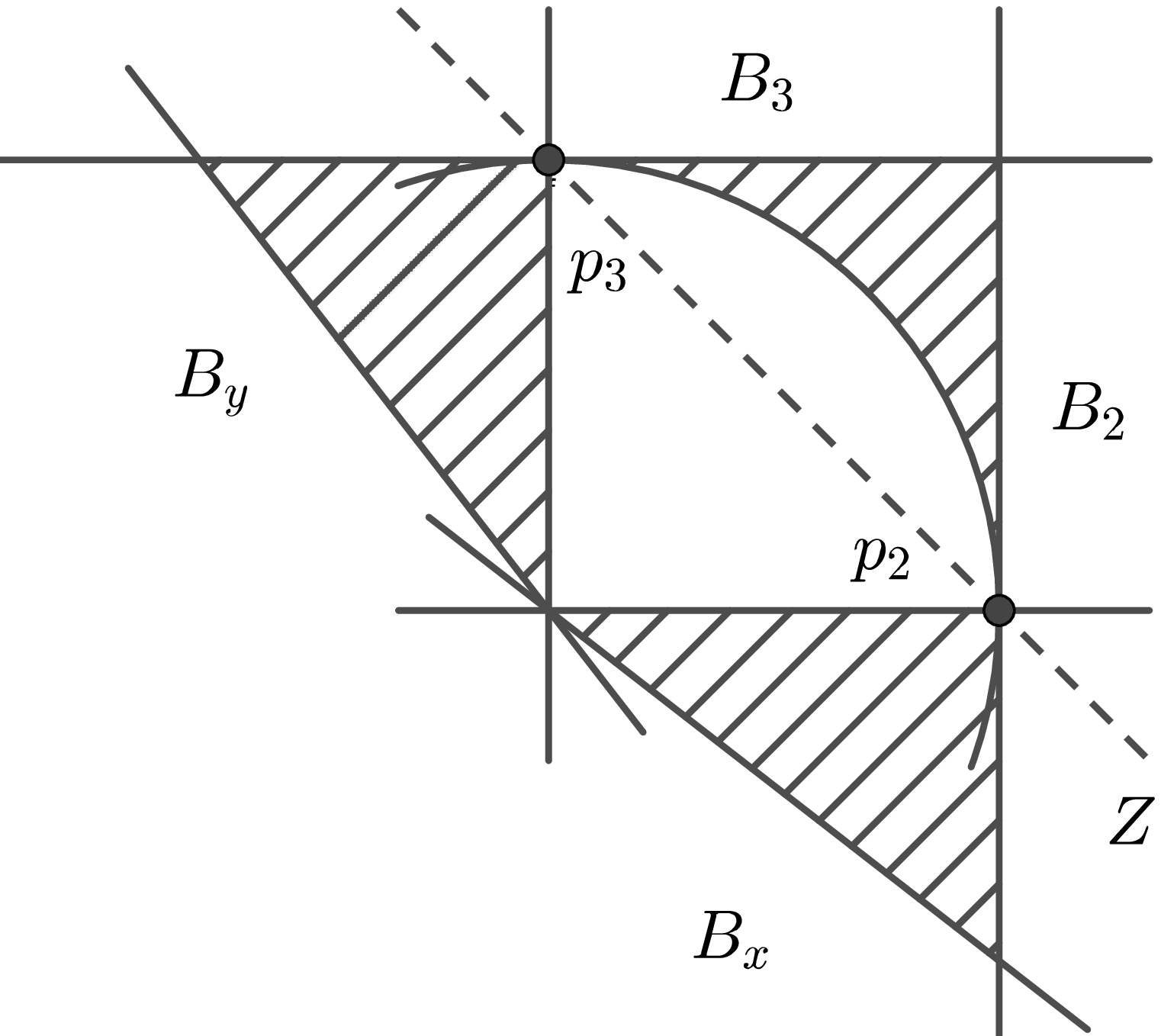}
\hskip 0.4in\includegraphics[width=4.8cm]{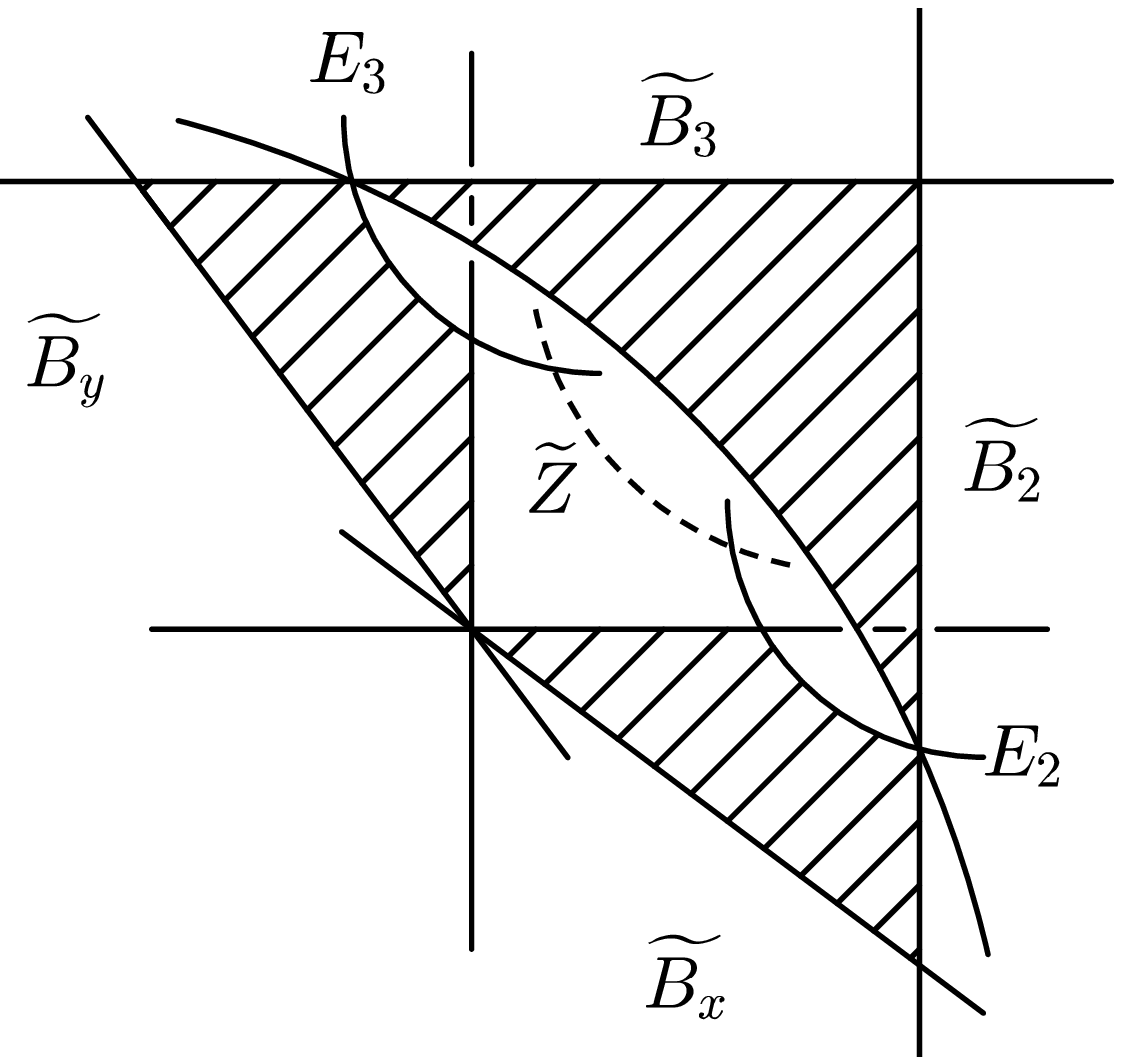}

\vskip 0.2in

Let $\widetilde{\Gamma_i}$ be the closure of $b^{-1}({\Gamma_i'}^0)$ where
${\Gamma_i'}^0$ is the relative interior of $\Gamma_i'$. The chain $\widetilde{\Gamma_i}$ is called 
the proper transform of
$\Gamma_i'$ for short.
From now on we assume that $\alpha$ and $\beta$ satisfy the convergent condition
Table \ref{conv cond}.
Via the birational map $b$, we have an equality 
$$
\int_{\Gamma_i'} \Omega =  \int_{\widetilde{\Gamma_i}} \widetilde{\Omega}.
$$
Let $\widetilde{B_i}$ and $\widetilde{R}$ be the proper transform of $B_i$ ($i=2,3,x,y$)
and $R=\{x^p+y^p=1\}\cup \{x=0\}\cup \{y=0\}$, respectively.
Then we have
\begin{align*}
&\partial\widetilde{\Gamma_1}\subset \widetilde{B_{3}}\cup \widetilde{B_{2}} \cup \widetilde{R},
\\
&\partial\widetilde{\Gamma_{2}}\subset \widetilde{B_{x}}\cup \widetilde{B_{2}} \cup \widetilde{R} \cup E_{2},
\\
&\partial\widetilde{\Gamma_{3}}\subset \widetilde{B_{y}}\cup \widetilde{B_{3}} \cup \widetilde{R} \cup E_{3}.
\end{align*}
Since $d\widetilde{\psi_p}=\widetilde{\Omega}$ on $\widetilde{X_{p}}$ 
and the pole of the rational differential form $\widetilde{\psi_p}$ 
does not intersect with the proper transforms 
$\widetilde{\Gamma_i}$ for $i=2,3$, we have
\begin{equation}
\label{Stokes on blow up}
\int_{\widetilde{\Gamma_i}} \widetilde{\Omega}=\int_{\partial\widetilde{\Gamma_i}}\widetilde{\psi_p}
\end{equation}
by Stokes' formula.

\subsection{Computation of restrictions for $\Gamma_1$}
The restriction of the rational one form $\psi$ to $\widetilde{B_i}$ $(i=x,y,2,3)$ and $E_j$ $(j=1,2)$ are denoted by 
$\psi_{B_i}=\psi|_{B_i}$ and $\psi_{E_j}=\psi|_{E_j}$, respeictively.
We have 
$\psi\mid_{x^p+y^p=1}=0$ on $X_{p}$ for $p=1, 2, 3$.
The restrictions $\psi_{B_{2}}$, $\psi_{B_{3}}$ are computed as follows.
\begin{align*}
&\text{ on }X_m:
\psi_{B_{2}}=
y^{\beta+\alpha} 
\dfrac{dy}{1+y},\quad
\psi_{B_{3}}=
x^{\beta-\alpha} 
\dfrac{dx}{1+x},
\quad 
\\
&\text{ on }X_{2m}:
\psi_{B_{2}}=
0,\quad
\psi_{B_{3}}=
2x^{2\alpha-1} 
\dfrac{dx}{2-x},
\quad 
\\
&\text{ on }X_{3m}:
\psi_{B_{2}}=
-y^{6\alpha}
\dfrac{9y(3+y^3)dy}{27+y^6},\quad
\psi_{B_{3}}=
x^{6\alpha}
\dfrac{9(9+x^3)dx}{27+x^6},
\quad 
\\
&\text{ on }X_{4m}:\begin{cases}&\psi_{B_{2}}=
-y^{12\alpha}\dfrac{8 \sqrt{2} y^2 (-8+4 y^4+y^8)}{-64+y^{12}} dy,
\\
&\psi_{B_{3}}=
x^{12\alpha}\dfrac{8 \sqrt{2} (-8 x^4-32+x^8)}{-64+x^{12}} dx.
\quad 
\end{cases}
\end{align*}

\subsection{Stokes' theorem for $\Gamma_1$}
By Stokes' theorem \ref{Stokes on blow up}, and the computations for the restriction of $\eta$, we have
the following theorem.
\begin{theorem}
\label{stokes consequence Gamma1}
Suppose that $\alpha, \beta$ be real number satisfying the condition
of Table \ref{conv cond}. Then we have the following equalities.
\begin{enumerate}
\item
For $-1+|\alpha|<\beta$, we have
$$
\alpha\int_{\Gamma_1}
(x+y-1)^{\beta}
x^{-\alpha-1}
y^{\alpha-1} 
dx\wedge dy
=-\int_0^1y^{\beta+\alpha} 
\dfrac{dy}{1+y}+
\int_0^1x^{\beta-\alpha} 
\dfrac{dx}{1+x}
$$
\item
For $\alpha>0$, we have
$$
\alpha\int_{\Gamma_1}(\xi+\eta-1)^{2\alpha-1} 
\xi^{-\alpha-1}
\eta^{-\alpha-1/2}
d\xi\wedge d\eta
=
2\int_0^1x^{2\alpha-1} 
\dfrac{dx}{2-x}
$$
\item
For $\alpha>0$, we have
\begin{align*}
&\alpha\int_{\Gamma_1}
(\xi+\eta-1)^{3\alpha-1} 
\xi^{-\alpha-2/3}
\eta^{-\alpha-1/3}
d\xi\wedge d\eta
\\
=&3\int_0^1y^{6\alpha}
\dfrac{y(3+y^3)dy}{27+y^6}
+3\int_0^1x^{6\alpha}
\dfrac{(9+x^3)dx}{27+x^6}
\end{align*}
\item
For $\alpha>0$, we have
\begin{align*}
&\alpha
\int_{\Gamma_1}(\xi+\eta-1)^{4\alpha-1} 
\xi^{-\alpha-3/4}
\eta^{-\alpha-1/4}
d\xi\wedge d\eta
\\
=&
2\int_0^1y^{12\alpha}\dfrac{y^2 (-8+4 y^4+y^8)}{64-y^{12}} dy 
+2\int_0^1x^{12\alpha}\dfrac{-8 x^4-32+x^8}{64-x^{12}} dx 
\end{align*}
\end{enumerate}
\end{theorem}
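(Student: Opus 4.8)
The plan is to turn each integral over $\Gamma_1$ into a boundary integral and to read the latter off from the restriction computations already in hand. First I would assemble the chain of identities
\[
\int_{\Gamma_1}\Omega=\int_{\Gamma_1'}\Omega=\int_{\widetilde{\Gamma_1}}\widetilde{\Omega}=\int_{\partial\widetilde{\Gamma_1}}\widetilde{\psi},
\]
where the first equality uses that $\Gamma_1'$ is the positive-real lift of $\Gamma_1$ under $\pi'$, so that the covering $(x,y)\mapsto(\xi,\eta)=(x^p,y^p)$ is orientation-preserving there; the second is the birational invariance under $b\colon\widetilde{X_p}\to X_p$; and the third is the Stokes identity (\ref{Stokes on blow up}), valid because on $\widetilde{X_p}$ the pole of $\widetilde{\psi_p}$ is disjoint from $\widetilde{\Gamma_1}$ after blowing up the corner points $p_2,p_3$, while the convergence conditions of Table \ref{conv cond} guarantee that every integral in sight converges.

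Next I would use the boundary decomposition $\partial\widetilde{\Gamma_1}\subset\widetilde{B_2}\cup\widetilde{B_3}\cup\widetilde{R}$ and treat the three pieces separately. On $\widetilde{R}$ the contribution vanishes: the only component of $R$ meeting $\partial\Gamma_1$ is the hypotenuse $\{x^p+y^p=1\}$, and there $f$ vanishes, hence $\psi=f^{\alpha}\psi_0=0$ for $\alpha>0$ (this is the vanishing recorded as $\psi|_{x^p+y^p=1}=0$; for $p=4$ the same argument applies once one checks that $\psi_0$ is regular along $\{x^4+y^4=1\}$). It then remains to integrate $\psi$ along the two edges lying on $\widetilde{B_2}=\{x=1\}$ and $\widetilde{B_3}=\{y=1\}$. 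I would parametrize the $\widetilde{B_2}$-edge by $y\in[0,1]$ and the $\widetilde{B_3}$-edge by $x\in[0,1]$, and fix their orientations from the standard (counterclockwise) orientation of $\Gamma_1$: the induced orientation runs $\eta$, hence $y$, from $0$ to $1$ on $\widetilde{B_2}$, and runs $\xi$, hence $x$, from $1$ to $0$ on $\widetilde{B_3}$. Substituting the precomputed restrictions $\psi_{B_2}$ and $\psi_{B_3}$ then produces the edge integrals, with an extra sign on the $\widetilde{B_3}$ term coming from the reversed orientation.

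Finally I would divide through by the scalar appearing in $\Omega$, namely $-\alpha$, $-\alpha$, $-3\alpha$, $-4\sqrt{2}\,\alpha$ for $p=1,2,3,4$, to arrive at the four displayed identities; the numerical constant in front of each right-hand integral is exactly the ratio of this scalar to the constant carried by the corresponding restriction $\psi_{B_2}$ or $\psi_{B_3}$. The bulk of the work, and the place where I expect the real difficulty, is the sign and branch bookkeeping. The integrand, $\Omega$, and $\psi=f^{\alpha}\psi_0$ are all multivalued, so one must propagate a single consistent branch (the positive-real one, with the convention $(-1)^{\alpha}=\bold e(\alpha/2)$) through the covering $\pi'$, the blow-up $b$, and the restrictions to $B_2$ and $B_3$. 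This is routine for $p=1,2,3$ but genuinely delicate for $p=4$, where $\psi_0$ is written through $d\log$ of quadratic factors carrying the complex coefficients $i^k$ and $\sqrt{2}$; controlling the phase of each logarithmic summand on the positive-real edges is what makes both the computation and the correct overall sign of the right-hand side the subtle point.
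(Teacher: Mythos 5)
Your proposal is correct and is essentially the paper's own proof: the paper obtains the theorem precisely by combining the Stokes identity on the blow-up (\ref{Stokes on blow up}) with the tabulated restrictions $\psi_{B_{2}}$, $\psi_{B_{3}}$ and the vanishing of $\psi$ along $\{x^p+y^p=1\}$, which is exactly your chain of equalities followed by orientation and scalar bookkeeping. The one detail you add beyond the paper is the explicit check that the hypotenuse contribution also vanishes for $p=4$ via $f^{\alpha}$ (the paper states the vanishing only for $p=1,2,3$); that is a worthwhile refinement, not a different route.
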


By anlytinc continuation, we have the following hypergeometric identities.
\begin{theorem}
\begin{enumerate}
\item
For $\alpha \not \in \bold Z$, we have the following equalities.
\begin{align*}
&
\dfrac{\alpha}{(\beta-\alpha+1)(\be+1+\alpha)}
F(1, 1, \be+1;\be-\alpha+2, \be+2+\alpha;1)
\\
=&
\int_{P(0,1)} 
\dfrac{x^{\beta-\alpha}-x^{\beta+\alpha}}{1+x}dx
\end{align*}
\item
For $2\alpha \not \in \bold Z$, we have the following equalities.
\begin{align*}
\dfrac{1}{\alpha+1/2}F(1, 1, 1/2;\alpha+1, \alpha+3/2;1)
=&
2\int_{P(0,1)} 
\dfrac{x^{2\alpha-1}dx}{2-x}
\end{align*}
\item
For $3\alpha \not \in \bold Z$, we have the following equalities.
\begin{align*}
&\dfrac{\alpha}{(6\alpha+1)(2 \alpha+2/3)}
F(1, 1, \alpha+1;2 \alpha+4/3, 2 \alpha+5/3;1)
\\
=&
\int_{P(0,1)}y^{6\alpha+1}
\dfrac{(3+y^3)dy}{27+y^6}
+\int_{P(0,1)}x^{6\alpha}
\dfrac{(9+x^3)dx}{27+x^6}
\end{align*}
\item
For $4\alpha \not \in \bold Z$, we have the following equalities.
\begin{align*}
&\dfrac{4 \alpha}{(12\alpha+1)(3 \alpha+3/4)}
F(1, 1, 2 \alpha+1;3 \alpha+5/4, 3 \alpha+7/4;1)
\\
=&
\int_{P(0,1)}y^{12\alpha}\dfrac{2 y^2 (8-4 y^4-y^8)}{64-y^{12}} dy 
+\int_{P(0,1)}x^{12\alpha}\dfrac{2 (8 x^4+32-x^8)}{64-x^{12}} dx 
\end{align*}
\end{enumerate}
\end{theorem}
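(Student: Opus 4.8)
The plan is to obtain all four identities as essentially formal consequences of Theorem \ref{stokes consequence Gamma1}: first rewrite each left-hand integral $\al\int_{\Gamma_1}\Omega$ as a value of the hypergeometric function $F$ using Proposition \ref{relative integral and hgf}, and then promote the resulting equality from the real convergence range of Table \ref{conv cond} to all $\al\notin\bold Z$ (resp. $p\al\notin\bold Z$) by analytic continuation, the passage to the Pochhammer integral of (\ref{Poch int}) being exactly what supplies this continuation on the right-hand side.

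First I would match parameters. In each case the integrand on the left of Theorem \ref{stokes consequence Gamma1} has the shape $(\xi+\eta-1)^{\alpha_1-1}\xi^{\alpha_2-1}\eta^{\alpha_3-1}$, and one reads off the triple $(\alpha_1,\alpha_2,\alpha_3)$: for example $(\be+1,-\al,\al)$ on $X_m$, $(2\al,-\al,\tfrac12-\al)$ on $X_{2m}$, $(3\al,-\al+\tfrac13,-\al+\tfrac23)$ on $X_{3m}$, and $(4\al,-\al+\tfrac14,-\al+\tfrac34)$ on $X_{4m}$. Substituting into Proposition \ref{relative integral and hgf} and multiplying by $\al$ yields $\dfrac{\al}{(\alpha_1+\alpha_2)(\alpha_1+\alpha_3)}F(1,1,\alpha_1+\alpha_2+\alpha_3;\alpha_1+\alpha_2+1,\alpha_1+\alpha_3+1;1)$, which after dividing out the overall numerical constant (the factor $3$ in (3), and so on) is exactly the stated left-hand side. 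Here I would remark that although Proposition \ref{relative integral and hgf} is phrased for $\alpha_1,\alpha_2,\alpha_3>0$, its proof only uses the substitution $\xi=\tfrac{1-s}{1-st}$, $\eta=\tfrac{1-t}{1-st}$ together with the Beta evaluations $B(1,\alpha_1+\alpha_2)=\tfrac{1}{\alpha_1+\alpha_2}$ and $B(1,\alpha_1+\alpha_3)=\tfrac{1}{\alpha_1+\alpha_3}$, and therefore remains valid under the weaker hypotheses $\alpha_1+\alpha_2>0$ and $\alpha_1+\alpha_3>0$ (with $\alpha_1+\alpha_2+\alpha_3$ governing the factor $(1-st)$). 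These are precisely the inequalities collected in Table \ref{conv cond}, so on the real convergence range the stated identities hold verbatim with each $\int_{P(0,1)}$ replaced by the ordinary $\int_0^1$, by Theorem \ref{stokes consequence Gamma1}.

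It remains to continue in the exponent. Each one-dimensional integral on the right is of the form $\int_0^1 x^{\gamma}f(x)\,dx$ with $f$ rational and holomorphic on $[0,1]$ (namely $\tfrac{1}{1+x}$, $\tfrac{1}{2-x}$, $\tfrac{9+x^3}{27+x^6}$, and so on), and by the discussion around (\ref{Poch int}) this equals the Pochhammer integral $\int_{P(0,1)}x^{\gamma}f(x)\,dx$ whenever $\gamma>-1$, while the latter is analytic in $\gamma$ for $\gamma\notin\bold Z$. On the left, the rational prefactor times $F(1,1,\dots;\dots;1)$ is meromorphic in $\al$, and jointly in $(\al,\be)$ in case (1). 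Both sides thus define meromorphic functions on the connected domain $\{\al\notin\bold Z\}$ and agree on the nonempty open real set of Table \ref{conv cond}, so by the identity theorem the equality extends to the full domain $\al\notin\bold Z$ (respectively $p\al\notin\bold Z$), with the mild genericity $\be\pm\al\notin\bold Z$ in case (1) needed only so that the two summands $\int_{P(0,1)}\tfrac{x^{\be-\al}}{1+x}\,dx$ and $\int_{P(0,1)}\tfrac{x^{\be+\al}}{1+x}\,dx$ are separately defined. I expect the main obstacle to be the bookkeeping of step one rather than the continuation: one must verify for all four families that the Euler evaluation reproduces the indicated $F$-parameters and, in particular, that the leading constants and signs match after the division (the factors $2$, $3$, and the $\sqrt2$-normalizations in cases (2)--(4) are where errors most easily creep in). Once the real-variable identity is secured, the extension to $\al\notin\bold Z$ is immediate from analyticity of the Pochhammer integral in its exponent.
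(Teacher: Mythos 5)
Your proposal follows the paper's own route exactly: the paper's proof of this theorem consists of the phrase ``by analytic continuation'' applied to Theorem \ref{stokes consequence Gamma1}, and the chain you spell out --- Proposition \ref{relative integral and hgf}, suitably extended (since $\alpha_2<0$ here) to the hypotheses $\alpha_1>0$, $\alpha_1+\alpha_2>0$, $\alpha_1+\alpha_3>0$ of Table \ref{conv cond}, to convert $\alpha\int_{\Gamma_1}\Omega$ into the hypergeometric value, then the Pochhammer integral (\ref{Poch int}) to continue in the exponents --- is precisely what is intended. Your parameter bookkeeping for (1)--(3) is correct, including the identities $(6\alpha+1)=3(2\alpha+\tfrac{1}{3})$ and $(12\alpha+1)=4(3\alpha+\tfrac{1}{4})$ that absorb the numerical prefactors.

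Case (4), however, has a genuine problem that your claim ``the stated identities hold verbatim \dots\ by Theorem \ref{stokes consequence Gamma1}'' glosses over: as printed, the right-hand side of Theorem \ref{stokes consequence Gamma1}(4) is the \emph{negative} of the right-hand side of the statement you are proving (the factors of $2$ match, but $y^2(-8+4y^4+y^8)$ and $-8x^4-32+x^8$ are the exact negatives of $y^2(8-4y^4-y^8)$ and $8x^4+32-x^8$). So your argument, carried out literally, proves the sign-flipped identity and hence contradicts the target. The target carries the correct sign: for small $\alpha>0$ both its integrands are positive on $(0,1)$, as is its left-hand side $\alpha\int_{\Gamma_1}(\xi+\eta-1)^{4\alpha-1}\xi^{-\alpha-3/4}\eta^{-\alpha-1/4}\,d\xi\wedge d\eta$, whereas the two integrands in Theorem \ref{stokes consequence Gamma1}(4) are negative there; the misprint therefore sits in Theorem \ref{stokes consequence Gamma1}(4), not in the statement at hand. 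This is exactly the kind of sign slip you flagged as the ``main obstacle'', but flagging is not resolving: to prove (4) you cannot cite Theorem \ref{stokes consequence Gamma1}(4) verbatim, you must redo its boundary computation. Using the restrictions $\psi_{B_2}$, $\psi_{B_3}$ for $X_{4m}$ from Section 3.2, the vanishing of $\psi$ on $\{x^4+y^4=1\}$, the counterclockwise orientation of $\partial\Gamma_1'$ giving $\int_{\Gamma_1'}\Omega=\int_0^1\psi_{B_2}-\int_0^1\psi_{B_3}$, and $\Omega=-4\sqrt{2}\,\alpha(\xi+\eta-1)^{4\alpha-1}\xi^{-\alpha-3/4}\eta^{-\alpha-1/4}\,d\xi\wedge d\eta$, division by $-4\sqrt{2}$ yields exactly the right-hand side of (4) as stated, with the positive integrands; this orientation convention is the correct one, since it reproduces parts (1)--(3) of Theorem \ref{stokes consequence Gamma1}. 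With that correction, your continuation step goes through unchanged.
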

\section{Hypergeometric identities for $\Gamma_{2}$}

\subsection{Restriction of $\psi$ to the boundary for $\Gamma_{2}$}
In this section, we give a hypergeometric identities arising from
the integration on $\Gamma_{2}$

\subsubsection{Restriction to the component $B_x$}
On $X_{pm}$, we have
\begin{align*}
&\psi_{B_x}=
\begin{cases}
(-1)^{\alpha}\dfrac{-2ix^{-4\alpha}dx}{2x^2-2x+1},
&\quad p=2
\\
-(-1)^{2\alpha}
\dfrac{27\omega x^{-6\alpha+2}(3x^3+1)}{1+27x^6}
dx,
&\quad p=3
\\
(-1)^{3\alpha}x^{-8\alpha}
\dfrac{(32-32i)(1+8x^4)x^3}{64x^8+1}dx,
&\quad p=4
\end{cases}
\end{align*}

\subsubsection{Restriction to the component $B_{2}$}
We restrict $\psi$ to $B_{2}$ and change parameter of integrals on $X_{pm}$
by $y=(-1)^{1/p}x$. Then we have
\begin{align*}
 \psi_{B_{2}}
=&\begin{cases}
0,
&\quad p=2
\\
-(-1)^{2\alpha}{x}^{6\alpha}
\dfrac{9\omega {x}(3-{x}^3)d{x}}{27+{x}^6},
&\quad p=3
\\
-(-1)^{3\alpha}x^{12\alpha} 
\dfrac{(8-8 i)x^2 (-8-4 x^4+x^8)}{64+x^{12}} dx, 
&\quad p=4
\end{cases}
\end{align*}
\subsubsection{Contribution from an exceptional divisor on $\Gamma_{2}$}

By setting $x-1=uy$, we compute the contribution of $\psi$ from the 
exceptional divisor $E_{2}$ on $X_{2m}$ at $x-1=y=0$ as follows:
\begin{align*}
\psi=
&2\dfrac{(x^2+y^2-1)^{2\alpha}(-ydx+(x-1)dy)}
{(x^2y^2)^{\alpha}((x-1)^2-y^2)}
\\
=&2\dfrac{(u^2y^2+2uy+y^2)^{2\alpha}(-yudy-y^2du+uydy)}
{((uy+1)^2y^2)^{\alpha}(u^2-1)y^2},
\\
\psi_{E_{2}}=&
2\dfrac{(2u)^{2\alpha}du}
{1-u^2}
=2\dfrac{(4u^2)^{\alpha}du}
{1-u^2}.
\end{align*}
Changing the coordinates by $u=-\bold e(-1/2p)v$,
($v \in \bold R_+$), we have
$$
\psi_{E_{2}}=
(-1)^{\alpha}\dfrac{2i(4v^2)^{\alpha}dv}{v^2+1}.
$$
Similarly on $X_{3m}$ and $X_{4m}$,
using parameters $x-1=uy$, $u=-\bold e(-1/2p)v$ ($v \in \bold R_+$), 
we have
$$
\psi_{E_{2}}=
\begin{cases}
(-1)^{2\alpha}\omega\dfrac{3(27v^3)^{\alpha}dv}{v^3+1},
&\text{ on }X_{3m}
\\
-(-1)^{3\alpha}(4v)^{4\alpha}\dfrac{(4-4 i) }{4 v^4+1}dv,\quad
&\text{ on }X_{4m}.
\end{cases}
$$
Therefore, we have
$$
\int_0^{\infty}\psi_{E_{2}}
=\begin{cases}
(-1)^{\alpha}4^{\alpha}\dfrac{\pi i}
{\cos(\pi\alpha)},
&\text{ on }X_{2m}
\\
(-1)^{2\alpha}\omega\dfrac{3^{3\alpha}\pi}{\sin(\pi\alpha+\pi/3)},
&\text{ on }X_{3m}
\\
-(-1)^{3\alpha}4^{3\alpha}\dfrac{(1/2-1/2 i)  
\sqrt{2} \pi}{\sin(\pi \alpha+\pi/4)},
&\text{ on }X_{3m}
\end{cases}
$$

\subsection{Hypergeometric identities for $\Gamma_{2}$} 
\subsubsection{Hypergeometric identities for $X_{2m}$} 
By Stokes's theorem on $X_{2m}$, we have the following 
equality:
\begin{align*}
&\alpha\int_{\Gamma_{2}}(\xi+\eta-1)^{2\alpha-1} 
\xi^{-\alpha-1}
\eta^{-\alpha-1/2}
d\xi\wedge d\eta
\\
=&(-1)^{\alpha-1/2}\alpha\int_{\Gamma_1}\xi^{2\alpha-1} 
\eta^{-\alpha-1}
(\xi+\eta-1)^{-\alpha-1/2}
d\xi\wedge d\eta
\\
=&\int_0^1
(-1)^{\alpha}\dfrac{-2ix^{-4\alpha}dx}{2x^2-2x+1}
-2\int_0^{i\infty}\dfrac{(4u^2)^{\alpha}du}
{1-u^2}
\\
=&\int_0^1
(-1)^{\alpha}\dfrac{-2ix^{-4\alpha}dx}{2x^2-2x+1}
-(-1)^{\alpha}4^{\alpha}\dfrac{\pi i}
{\cos(\pi\alpha)}
\end{align*}
By analytic continumation, we have the following theoreum.
\begin{theorem}
Let $2\alpha \not\in \bold Z$. Then we have the following equalities.
\begin{enumerate} 
\item
\begin{align*}
&
\dfrac{\alpha}{(\alpha+1/2)(-2 \alpha+1/2)}
F(1, 1, 1/2;\alpha+3/2, -2 \alpha+3/2;1)
\\
=&
\int_{P(0,1)}\dfrac{2x^{-4\alpha}dx}{2x^2-2x+1}
+\dfrac{4^{\alpha}\pi}
{\cos(\pi\alpha)}
\end{align*}
\item
\begin{align*}
&
\dfrac{\alpha-1/2}{(-2\alpha+3/2)\alpha}F(1,1,1/2;
\alpha+1,-2\alpha+5/2;1)
 =
\int_{P(0,1)}\dfrac{2x^{-4\alpha+2}dx}{2x^2-2x+1}
+\dfrac{2^{2\alpha-1}\pi}
{\cos(\pi\alpha-\pi/2)}
\end{align*}
\end{enumerate}
\end{theorem}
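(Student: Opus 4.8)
The plan is to assemble three facts that have already been established above—the reduction of the $\Gamma_{2}$-integral to a $\Gamma_1$-integral, Proposition \ref{relative integral and hgf}, and the Stokes computation carried out just before the statement—and then to pass from the convergence window of Table \ref{conv cond} to all $\alpha$ with $2\alpha\notin\bold Z$ by analytic continuation. Concretely, for part (1) I would take as starting point the chain of equalities displayed immediately before the theorem, which by Stokes' formula (\ref{Stokes on blow up}), the vanishing $\psi_{B_{2}}=0$ on $X_{2m}$, the restriction $\psi_{B_x}$, and the exceptional-divisor evaluation $\int_0^{i\infty}(4u^2)^{\alpha}\,du/(1-u^2)=\tfrac12(-1)^{\alpha}4^{\alpha}\pi i/\cos(\pi\alpha)$, reads, for $0<\alpha<1/4$,
\[
\alpha\int_{\Gamma_{2}}(\xi+\eta-1)^{2\alpha-1}\xi^{-\alpha-1}\eta^{-\alpha-1/2}\,d\xi\,d\eta
=(-1)^{\alpha}\left[\int_0^1\frac{-2i\,x^{-4\alpha}}{2x^2-2x+1}\,dx-\frac{4^{\alpha}\pi i}{\cos(\pi\alpha)}\right].
\]

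The first step is to rewrite the left-hand side as a hypergeometric value. Using the reduction identity for $\Gamma_{2}$ stated in the previous section (with the branch convention giving $(-1)^{\alpha-1/2}=-i\,(-1)^{\alpha}$), I pass to the integral over $\Gamma_1$ of $\xi^{2\alpha-1}\eta^{-\alpha-1}(\xi+\eta-1)^{-\alpha-1/2}$, and then apply Proposition \ref{relative integral and hgf} with $(\alpha_1,\alpha_2,\alpha_3)=(-\alpha+1/2,\,2\alpha,\,-\alpha)$. Here the Beta-function computation in that proposition is read as a meromorphic identity in the parameters (exactly as it was already used in the $\Gamma_1$ cases, where $\alpha_2<0$), so negativity of $\alpha_3$ is harmless. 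Since $\alpha_1+\alpha_2=\alpha+1/2$, $\alpha_1+\alpha_3=-2\alpha+1/2$ and $\alpha_1+\alpha_2+\alpha_3=1/2$, this produces precisely $F(1,1,1/2;\alpha+3/2,-2\alpha+3/2;1)$ with prefactor $1/\big((\alpha+1/2)(-2\alpha+1/2)\big)$, times the overall factor $(-1)^{\alpha-1/2}\alpha$.

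The second step is to clear constants: dividing the resulting equality through by the common prefactor $(-1)^{\alpha-1/2}=-i(-1)^{\alpha}$ sends $-2i\mapsto 2$ in the boundary integral and $-\pi i/\cos(\pi\alpha)\mapsto +\pi/\cos(\pi\alpha)$ in the exceptional term, giving exactly the asserted equality of part (1) but with $\int_0^1$ in place of $\int_{P(0,1)}$ and valid only on $0<\alpha<1/4$. The third step is analytic continuation: the hypergeometric value and $4^{\alpha}\pi/\cos(\pi\alpha)$ are holomorphic in $\alpha$ on $\{2\alpha\notin\bold Z\}$, and by (\ref{Poch int}) the Pochhammer integral $\int_{P(0,1)}x^{-4\alpha}f(x)\,dx$ is holomorphic there and coincides with $\int_0^1$ when $-4\alpha>-1$; hence the identity extends to all $\alpha$ with $2\alpha\notin\bold Z$ upon replacing $\int_0^1$ by $\int_{P(0,1)}$, which is part (1). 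For part (2) I would simply substitute $\alpha\mapsto\alpha-1/2$ in part (1), legitimate because part (1) is an identity of holomorphic functions on $\{2\alpha\notin\bold Z\}$ and $2(\alpha-1/2)\notin\bold Z$ there. Under this shift $F(1,1,1/2;\alpha+3/2,-2\alpha+3/2;1)$ becomes $F(1,1,1/2;\alpha+1,-2\alpha+5/2;1)$, the prefactor becomes $(\alpha-1/2)/\big(\alpha(-2\alpha+3/2)\big)$, $x^{-4\alpha}\mapsto x^{-4\alpha+2}$, $4^{\alpha}\mapsto 2^{2\alpha-1}$ and $\cos(\pi\alpha)\mapsto\cos(\pi\alpha-\pi/2)$, reproducing the second equality; this shift is the analytic shadow of the geometric reduction of $(X_{2m},\Gamma_{3})$ to $(X_{2m},\Gamma_{2})$ noted earlier.

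The main obstacle is not conceptual but bookkeeping: one must track the branch choice $(-1)^{\beta}=\bold e(\beta/2)$ consistently through the reduction factor $(-1)^{\alpha-1/2}$, the $\psi_{B_x}$ restriction, and the exceptional-divisor integral so that every power of $i$ cancels and the final constants emerge real and positive. I would also verify that the starting window $0<\alpha<1/4$ from Table \ref{conv cond} is a nonempty open set on which all integrals genuinely converge, so that the continuation argument has a base of identity to propagate from.
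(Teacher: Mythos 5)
Your proposal is correct and follows essentially the same route as the paper: the Stokes chain of equalities on $X_{2m}$ (boundary term $\psi_{B_x}$, vanishing of $\psi_{B_{2}}$, exceptional-divisor contribution), conversion of the permuted $\Gamma_1$-integral to the hypergeometric value via Proposition \ref{relative integral and hgf}, analytic continuation through Pochhammer integrals, and the shift $\alpha\mapsto\alpha-1/2$ for part (2). Your treatment is in fact more careful than the paper's on two points it leaves implicit — the cancellation of the branch factor $(-1)^{\alpha-1/2}=-i(-1)^{\alpha}$ and the extension of Proposition \ref{relative integral and hgf} beyond positive parameters — but these are elaborations of the same argument, not a different one.
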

The second statement is obtained by setting $\alpha\mapsto \alpha-1/2$.
\subsubsection{Hypergeometric identities for $X_{3m}, X_{4m}$}

Using Stokes' theorem,  the integral is computed using the boundary integral
on $\Gamma_{2}$.
By Proposition \ref{relative integral and hgf},
we have the following theorem.
\begin{theorem}
\begin{enumerate}
\item
Let $\alpha$ be a real number such that $-1/3<\alpha<1/2$.
\begin{align*}
&\dfrac{3\alpha}{(2\alpha+2/3)(-2 \al+1)}
F(1, 1, \al+1;2 \al+5/3, -2 \al+2;1)
\\
=&3\alpha\int_{\Gamma_1}\xi^{3\alpha-1} 
\eta^{-\alpha-2/3}
(\xi+\eta-1)^{-\alpha-1/3}
d\xi\wedge d\eta
\\
=&
\int_{P(0,1)}\dfrac{27 x^{-6\alpha+2}(3x^3+1)}{1+27x^6}
dx
-
\int_{P(0,1)}\dfrac{9x^{6\alpha+1}(3-x^3)}{27+x^6}dx
-\dfrac{3^{3\alpha}\pi}{\sin(\pi\alpha+\pi/3)}.
\end{align*}
Moreover the first line is equal to the third line if $3\alpha\not\in\bold Z$.

\item
Let $\alpha$ be a real number such that $-1/4<\alpha<1/2$.
\begin{align*}
&-\dfrac{\alpha}{(3\alpha+3/4)(-2\al+1)}
F(1, 1, 2 \al+1;3 \al+7/4, -2 \al+2;1)
\\
=&-\alpha
\int_{\Gamma_1}(\xi+\eta-1)^{-\alpha-1/4}
\xi^{4\alpha-1} 
\eta^{-\alpha-3/4}
d\xi\wedge d\eta
\\
=&-\int_{P(0,1)}x^{-8\alpha}
\dfrac{8(1+8x^4)x^3}{64x^8+1}dx- 
\int_{P(0,1)}x^{12\alpha} 
\dfrac{2x^2 (-8-4 x^4+x^8)}{64+x^{12}} dx 
\\
&+4^{3\alpha-1}\dfrac{\pi}
{\sqrt{2}\sin(\pi \alpha+\pi/4)}
\end{align*}
Moreover the first line is equal to the third line if $4\alpha\not\in\bold Z$.
\end{enumerate}
\end{theorem}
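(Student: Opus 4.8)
The plan is to prove, in each of the two cases, the two displayed equalities separately and then pass to the Pochhammer form by analytic continuation. The equality of the first two lines is a direct application of Proposition \ref{relative integral and hgf}: for part (1) I feed in the exponent triple $(\alpha_1,\alpha_2,\alpha_3)=(-\alpha+2/3,\,3\alpha,\,-\alpha+1/3)$, which is precisely the ordering of the exponents in the $\Gamma_1$-integral on the second line. Since $\alpha_1+\alpha_2=2\alpha+2/3$, $\alpha_1+\alpha_3=-2\alpha+1$ and $\alpha_1+\alpha_2+\alpha_3=\alpha+1$, the proposition returns $\frac{1}{(2\alpha+2/3)(-2\alpha+1)}F(1,1,\alpha+1;2\alpha+5/3,-2\alpha+2;1)$, and multiplying by the prefactor $3\alpha$ gives the first line. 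Part (2) is identical with $(\alpha_1,\alpha_2,\alpha_3)=(-\alpha+3/4,\,4\alpha,\,-\alpha+1/4)$, yielding the parameters $3\alpha+7/4$, $-2\alpha+2$ and the prefactor $-\alpha$.

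For the equality of the second and third lines I run the Stokes argument on the blow-up. Using the permutation identity for $\Gamma_2$ from the subsection on permutations of exponent indices, I rewrite the $\Gamma_1$-integral of the second line as a $\Gamma_2$-integral of the form $\Omega=\Omega_p$ attached to $X_{pm}$; for $p=3$, with exponents $(\alpha_1,\alpha_2,\alpha_3)=(3\alpha,-\alpha+1/3,-\alpha+2/3)$ this reads $\int_{\Gamma_2}\Omega=-3\alpha(-1)^{2\alpha+2/3}\int_{\Gamma_1}\xi^{3\alpha-1}\eta^{-\alpha-2/3}(\xi+\eta-1)^{-\alpha-1/3}$. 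Then Stokes' formula (\ref{Stokes on blow up}) on $\widetilde{X_3}$ gives $\int_{\Gamma_2}\Omega=\int_{\partial\widetilde{\Gamma_2}}\widetilde{\psi_3}$ with $\partial\widetilde{\Gamma_2}\subset\widetilde{B_x}\cup\widetilde{B_2}\cup\widetilde R\cup E_2$. The contribution along $\widetilde R$ vanishes: the only edge of $\Gamma_2$ meeting $R$ is $\{y=0\}$, and there the $dx$-coefficient of $\psi_0$ is zero because $\sum_{j}\omega^{j}=0$ (for $p=4$ because $\sum_k i^k=0$), so $\psi$ restricts to the zero form. The three surviving contributions are the restrictions $\psi_{B_x},\psi_{B_2}$ and the exceptional integral $\int_0^\infty\psi_{E_2}$ computed in the preceding subsections; I evaluate the last by the standard formula $\int_0^\infty\frac{v^{s}}{1+v^n}\,dv=\frac{\pi/n}{\sin(\pi(s+1)/n)}$, which produces the $3^{3\alpha}\pi/\sin(\pi\alpha+\pi/3)$ term (and the $\sin(\pi\alpha+\pi/4)$ term for $p=4$).

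The main obstacle is the phase bookkeeping. The prefactor $-3\alpha(-1)^{2\alpha+2/3}$ must be matched against the factors $-(-1)^{2\alpha}\omega$ carried by $\psi_{B_x},\psi_{B_2}$ and the factor $(-1)^{2\alpha}\omega$ carried by $\int_0^\infty\psi_{E_2}$; with the convention $(-1)^{\gamma}=\bold e(\gamma/2)$ one has $(-1)^{2/3}=\bold e(1/3)=\omega$, so $(-1)^{2\alpha+2/3}=(-1)^{2\alpha}\omega$ and every phase cancels, leaving the real integrands $\frac{27x^{-6\alpha+2}(3x^3+1)}{1+27x^6}$ and $\frac{9x^{6\alpha+1}(3-x^3)}{27+x^6}$ together with the real term $-3^{3\alpha}\pi/\sin(\pi\alpha+\pi/3)$ and the stated signs. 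For $p=4$ the prefactor $-4\sqrt2\,\alpha(-1)^{3\alpha+3/4}$ must absorb the constants $(8-8i),(32-32i)$ and $\sqrt2$ of $\psi_{B_x},\psi_{B_2},\psi_{E_2}$; here the identities $(-1)^{3/4}=\bold e(3/8)$ and $1-i=\sqrt2\,\bold e(-1/8)$ drive the cancellation, the constant $4\sqrt2$ being absorbed so that $-4\sqrt2\,\alpha$ reduces to $-\alpha$ and $4^{3\alpha}$ becomes $4^{3\alpha-1}/\sqrt2$. I expect this to be the genuinely delicate step: each branch choice, each root of unity $\omega,i^k$, and each $\sqrt2$ must be tracked consistently through the permutation sign, the boundary restrictions and the exceptional residue, and one must verify that every non-real phase cancels so as to reproduce the manifestly real right-hand side.

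Finally, on the convergence range of Table \ref{conv cond} ($-1/3<\alpha<1/2$ for $p=3$, $-1/4<\alpha<1/2$ for $p=4$) every ordinary integral $\int_0^1$ above converges and, by (\ref{Poch int}), equals the corresponding Pochhammer integral $\int_{P(0,1)}$, which establishes that the second line equals the third. For the closing assertion that the first line equals the third whenever $p\alpha\notin\bold Z$, both sides extend to analytic functions of $\alpha$ off $p\alpha\in\bold Z$ and agree on the open interval above, hence agree throughout their common domain by the identity theorem; the one point needing care is to confirm that the combination stays analytic on the full range $p\alpha\notin\bold Z$, the apparent poles of the individual Pochhammer integrals cancelling between terms.
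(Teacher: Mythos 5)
Your proposal is correct and takes essentially the same route as the paper's own proof: the first equality via Proposition \ref{relative integral and hgf} with exactly the exponent triples you identify, and the second by rewriting the $\Gamma_1$-integral as the $\Gamma_{2}$-integral of $\Omega_p$ through the permutation identity, applying Stokes' formula (\ref{Stokes on blow up}) on the blow-up, and summing the contributions $\psi_{B_x}$, $\psi_{B_{2}}$, $\int_0^\infty\psi_{E_{2}}$ from Section 4.1, followed by analytic continuation to the Pochhammer form. If anything, you are more careful than the paper, which silently discards the boundary piece along $\widetilde{R}$ and leaves the phase cancellation implicit.
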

\begin{proof}
(1) We apply Skokes' formula for $X_{3m}$ and we have
\begin{align*}
&-3\alpha\int_{\Gamma_{2}}(\xi+\eta-1)^{3\alpha-1} 
\xi^{-\alpha-2/3}
\eta^{-\alpha-1/3}
d\xi\wedge d\eta
\\
=&-3(-1)^{2\alpha+2/3}\alpha\int_{\Gamma_1}\xi^{3\alpha-1} 
\eta^{-\alpha-2/3}
(\xi+\eta-1)^{-\alpha-1/3}
d\xi\wedge d\eta
\\
=&-(-1)^{2\alpha}\omega
\int_0^1\dfrac{27 x^{-6\alpha+2}(3x^3+1)}{1+27x^6}
dx
+(-1)^{2\alpha}\omega \int_0^1x^{6\alpha}
\dfrac{9x(3-x^3)dx}{27+x^6}
\\
&-(-1)^{2\alpha}\omega\dfrac{3^{3\alpha}\pi}{\sin(\pi\alpha+\pi/3)}.
\end{align*}
(2) is similar by applying Stokes' formula to $X_{4m}$.
\end{proof}
\section{Hypergeometric identities for $\Gamma_{3}$}

\subsection{Restriction of $\psi$ to the boundary for $\Gamma_{3}$}
In this section, we give a hypergeometric identities arising from
the integration on $\Gamma_{3}$ for $p=3,4$.

\subsubsection{Restriction to the boundary $B_y$}
On $X_{pm}$, we have
$$
\psi_{B_y}=
\begin{cases}
(-1)^{2\alpha}\dfrac{27\omega^2y^{-6\alpha+2}(-3y^3+1)}{1+27y^6}dy
&\quad p=3
\\ 
(-1)^{3\alpha}y^{-8\alpha}
\dfrac{(32+32 i)  (8 y^4-1) y^3}{64 y^8+1} dy
&\quad p=4
\end{cases}
$$

\subsubsection{Restriction to the boundary $B_{3}$}
We restrict $\psi$ to $B_{3}$ and change parameters of integrals on $X_{pm}$, 
\begin{align*}
\psi_{B_{3}}=&
\begin{cases}
-(-1)^{2\alpha}{y}^{6\alpha}
\dfrac{9\omega^2(9-{y}^3)dy}{27+{y}^6}
&\quad p=3
\\
-(-1)^{3\alpha}y^{12\alpha}
\dfrac{(8+8 i) (8 y^4-32+y^8)}{64+y^{12}}dy 
&\quad p=4
\end{cases}
\end{align*}

\subsubsection{Contribution from an exceptional divisor}

By setting $y-1=ux$, $u=-\bold e(-1/2p)v$ on $X_{pm}$,
($v \in \bold R_+$), 
the restriction of $\psi$ to the 
exceptional divisor  
$E_{3}\subset \Gamma_{3}$ is equal to
$$
\psi_{E_{3}}=
\begin{cases}
-(-1)^{2\alpha}\omega^2\dfrac{3(27v^3)^{\alpha} vdv}{v^3+1},
&\text{ on }X_{3m}
\\
-(-1)^{3\alpha}(4v)^{4\alpha}\dfrac{(-8-8 i) v^2 }{4 v^4+1}dv
&\text{ on }X_{4m}
\end{cases}
$$
Then the integrals are equal to the following.
$$
\int_0^{\infty}\psi_{E_{3}}=
\begin{cases}
-(-1)^{2\alpha}\omega^2\dfrac{3^{3\alpha}\pi}{\sin(\pi\alpha+2\pi/3)}
&\text{ on }X_{3m}
\\
-(-1)^{3\alpha}4^{3\alpha}\dfrac{(-1/2-1/2 i) 
\sqrt{2} \pi}{\cos(\pi \alpha+\pi/4)}
&\text{ on }X_{4m}
\end{cases}
$$

\subsection{Hypergeometric identities for $\Gamma_{3}$}

We similarly compute the integral on $\Gamma_{3}$ and
get the following theorem.
\begin{theorem}
\begin{enumerate}
\item
Let $\alpha$ be a real number such that $-1/6<\alpha<1/3$.
\begin{align*}
&
\dfrac{3\alpha}{(-2\alpha+1)(2 \al+1/3)}
F(1, 1, \al+1;-2 \al+2, 2 \al+4/3;1)
\\
=&3\alpha\int_{\Gamma_1}\eta^{3\alpha-1} 
(\xi+\eta-1)^{-\alpha-2/3}
\xi^{-\alpha-1/3}
d\xi\wedge d\eta
\\
=&
-\int_{P(0,1)}\dfrac{27y^{-6\alpha+2}(-3y^3+1)}{1+27y^6}dy
-\int_{P(0,1)}
\dfrac{9 y^{6\alpha}(9-y^3)}{27+y^6}dy
+\dfrac{3^{3\alpha}\pi}{\sin(\pi\alpha+2\pi/3)}
\end{align*}
Moreover the first line is equal to the third line if $3\alpha\not\in\bold Z$.
\item
Let $\alpha$ be a real number such that $-1/12<\alpha<1/4$.
 \begin{align*}
&-\dfrac{\alpha}{(-2\alpha+1)(3\alpha+1/4)}
F(1, 1, 2 \al+1;-2 \al+2, 3 \al+5/4;1)
\\
=&-\alpha
\int_{\Gamma_1} 
(\xi+\eta-1)^{-\alpha-3/4}
\xi^{-\alpha-1/4}\eta^{4\alpha-1}
d\xi\wedge d\eta
\\
=&\int_{P(0,1)}y^{-8\alpha}
\dfrac{8  (1-8 y^4) y^3}{64 y^8+1} dy
-\int_{P(0,1)}y^{12\alpha}
\dfrac{2 (8 y^4-32+y^8)}{64+y^{12}}dy 
\\
&-4^{3\alpha-1}\dfrac{\pi}
{\sqrt{2}\cos(\pi \alpha+\pi/4)}
\end{align*}
Moreover the first line is equal to the third line if $4\alpha\not\in\bold Z$.
\end{enumerate}
\end{theorem}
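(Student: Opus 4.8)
The plan is to prove both equalities in each of the two parts by the identical scheme; I describe part (1), where the relevant form lives on $X_{3m}$, and indicate the changes for part (2) ($X_{4m}$) at the end. The first equality (line one $=$ line two) is pure parameter matching via Proposition~\ref{relative integral and hgf}. Reading off the exponents of the integrand $\eta^{3\alpha-1}(\xi+\eta-1)^{-\alpha-2/3}\xi^{-\alpha-1/3}$ on line two, I set $\alpha_1=-\alpha+\tfrac13$, $\alpha_2=-\alpha+\tfrac23$, $\alpha_3=3\alpha$, so that the form is exactly $(\xi+\eta-1)^{\alpha_1-1}\xi^{\alpha_2-1}\eta^{\alpha_3-1}$. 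Since $\alpha_1+\alpha_2+\alpha_3=\alpha+1$, $\alpha_1+\alpha_2=-2\alpha+1$ and $\alpha_1+\alpha_3=2\alpha+\tfrac13$, the proposition yields $\tfrac{1}{(-2\alpha+1)(2\alpha+1/3)}F(1,1,\alpha+1;-2\alpha+2,2\alpha+\tfrac43;1)$, and multiplying through by $3\alpha$ reproduces line one.

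The substance is the second equality, for which I would run the Stokes argument on the blow-up exactly as in the $\Gamma_2$ theorem. Starting from $\Omega=d\psi$ on $X_{3m}$, I first rewrite $\int_{\Gamma_3}\Omega$ in terms of the $\Gamma_1$ integral of line two using the permutation reduction for $\Gamma_3$ of the ``Permutations of exponent indices'' subsection: with $\alpha_1=3\alpha$, $\alpha_2=-\alpha+\tfrac13$ this produces the phase $(-1)^{\alpha_1+\alpha_2}=(-1)^{2\alpha+1/3}$, so that $\int_{\Gamma_3}\Omega=-3\alpha(-1)^{2\alpha+1/3}\int_{\Gamma_1}(\cdots)$. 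On the other side, the Stokes identity (\ref{Stokes on blow up}) gives $\int_{\widetilde{\Gamma_3}}\widetilde\Omega=\int_{\partial\widetilde{\Gamma_3}}\widetilde\psi$, and since $\partial\widetilde{\Gamma_3}\subset\widetilde{B_y}\cup\widetilde{B_3}\cup\widetilde R\cup E_3$ with $\psi$ vanishing on $\widetilde R$ (because $\psi|_{x^p+y^p=1}=0$), the boundary integral splits into the contributions of $B_y$, $B_3$ and $E_3$ already computed. Plugging in $\psi_{B_y}$, $\psi_{B_3}$ and $\int_0^\infty\psi_{E_3}$ assembles precisely the three terms on line three.

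The one genuine computation is the exceptional-divisor term: after the substitutions $y-1=ux$, $u=-\bold e(-1/2p)v$, the restriction $\psi_{E_3}$ becomes a constant multiple of $v^{3\alpha+1}(v^3+1)^{-1}\,dv$, and I would evaluate $\int_0^\infty v^{3\alpha+1}(v^3+1)^{-1}\,dv=\tfrac{\pi/3}{\sin(\pi\alpha+2\pi/3)}$ by the standard beta integral $\int_0^\infty v^{s-1}(1+v^n)^{-1}\,dv=\tfrac{\pi/n}{\sin(\pi s/n)}$; together with $27^\alpha=3^{3\alpha}$ this gives the trigonometric term. I expect the main obstacle to be the bookkeeping of the branch factors. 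One must check that the $(-1)^{2\alpha+1/3}$ coming from the permutation and the factor $(-1)^{2\alpha}\omega^2$ common to the three restrictions combine correctly: the $(-1)^{2\alpha}$ cancels, and the remaining phase $\omega^2/(-1)^{1/3}=\bold e(2/3)/\bold e(1/6)=\bold e(1/2)=-1$ absorbs the last branch ambiguity, so that the result is a relation among real quantities. Pinning down the three orientation signs of $\partial\widetilde{\Gamma_3}$ relative to the natural parametrizations of $B_y$, $B_3$, $E_3$ is what fixes the displayed signs $-,-,+$ on line three, and this is the delicate part.

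Finally, the identities so obtained hold for $\alpha$ in the convergent range $-1/6<\alpha<1/3$ of Table~\ref{conv cond}; replacing each $\int_0^1$ by the Pochhammer integral $\int_{P(0,1)}$ extends both sides to analytic functions of $\alpha$, yielding the equality of the first and third lines for all $3\alpha\notin\bold Z$. Part (2) follows by the same four steps applied to $X_{4m}$: the first equality comes from Proposition~\ref{relative integral and hgf} with $\alpha_1=-\alpha+\tfrac14$, $\alpha_2=-\alpha+\tfrac34$, $\alpha_3=4\alpha$; the Stokes computation uses the $p=4$ restrictions $\psi_{B_y}$, $\psi_{B_3}$ and $\int_0^\infty\psi_{E_3}$, the phase $(-1)^{3\alpha}$ in place of $(-1)^{2\alpha}$, and a beta integral of the same type (after rescaling by $\sqrt2$) producing the factor $\cos(\pi\alpha+\pi/4)$, where one additionally tracks the factors of $\sqrt2$ and of $i$ occurring in the $X_{4m}$ restrictions.
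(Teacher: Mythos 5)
Your proposal is correct and follows essentially the same route as the paper: the paper proves this theorem by exactly the scheme you describe (it states ``we similarly compute the integral on $\Gamma_{3}$,'' referring to the $\Gamma_{2}$ proof), namely parameter matching via Proposition \ref{relative integral and hgf} for the first equality, the permutation reduction $\int_{\Gamma_3}\Omega=-3\alpha(-1)^{2\alpha+1/3}\int_{\Gamma_1}(\cdots)$ followed by Stokes' formula on the blow-up with boundary contributions from $\widetilde{B_y}$, $\widetilde{B_3}$, $E_3$, and Pochhammer analytic continuation. Your phase bookkeeping ($(-1)^{2\alpha}$ cancelling, $\omega^2/(-1)^{1/3}=-1$) and your beta-integral evaluation $\int_0^\infty v^{3\alpha+1}(v^3+1)^{-1}dv=\frac{\pi/3}{\sin(\pi\alpha+2\pi/3)}$ agree with the paper's stated restriction formulas $\psi_{B_y}$, $\psi_{B_3}$, $\int_0^\infty\psi_{E_3}$.
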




\appendix

\section{List of orbits for exceptional characters}

\noindent

In this section, we give a list of orbits of
$(\alpha_0,\alpha_1,\alpha_2,\alpha_3)$
under the action of multiplicative group $(\bold Z/m\bold Z)^{\times}$
satisfying the condition (\ref{Hodge cycles}).
The number of exceptional characters are written as $e_m$, the number of orbits are written as $o_m$.

\noindent
$m=12$, $e_m=8$, $o_m=2$
$$
\{(1, 4, 9, 10), (1, 6, 8, 9)\}
$$
$m=14$, $e_m=2$, $o_m=1$
$$
\{(1, 7, 9, 11)\}
$$
$m=15$, $e_m=8$, $o_m=1$
$$
\{(1, 6, 10, 13)\}
$$
$m=18$, $e_m=18$, $o_m=3$
$$
\{(1, 6, 14, 15), (1, 7, 12, 16), (1, 9, 12, 14)\}
$$
$m=20$, $e_m=26$, $o_m=4$
$$
\{(1, 4, 17, 18), (1, 6, 16, 17), (1, 9, 13, 17), (1, 10, 12, 17)\}
$$
$m=21$, $e_m=12$, $o_m=1$
$$
\{(1, 4, 18, 19)\}
$$
$m=24$, $e_m=38$, $o_m=8$
$$
\{(1, 6, 19, 22), (1, 8, 17, 22), (1, 8, 19, 20), (1, 11, 17, 19), $$
$$
(1, 12, 16, 19), (1, 12, 17, 18), (1, 13, 16, 18), (2, 9, 16, 21)\}
$$
$m=28$, $e_m=10$, $o_m=2$
$$
\{(1, 9, 21, 25), (1, 15, 18, 22)\}
$$
$m=30$, $e_m=98$, $o_m=15$
$$
\{(1, 4, 27, 28), (1, 7, 25, 27), (1, 8, 25, 26), (1, 10, 24, 25), (1, 11, 24, 24), 
$$
$$
(1, 12, 20, 27), (1, 12, 23, 24), (1, 15, 17, 27), (1, 15, 19, 25), (1, 15, 20, 24), 
$$
$$
(1, 16, 21, 22), (1, 17, 19, 23), (1, 18, 20, 21), (1, 19, 20, 20), (2, 15, 21, 22)\}
$$
$m=36$, $e_m=18$, $o_m=2$
$$
\{(1, 19, 24, 28), (2, 9, 28, 33)\}
$$
$m=40$, $e_m=16$, $o_m=2$
$$
\{(1, 21, 24, 34), (1, 21, 26, 32)\}
$$
$m=42$, $e_m=166$, $o_m=16$
$$
\{(1, 6, 37, 40), (1, 6, 38, 39), (1, 8, 37, 38), (1, 12, 33, 38), (1, 13, 32, 38), 
$$
$$
(1, 15, 31, 37), (1, 15, 32, 36), (1, 16, 30, 37), (1, 16, 33, 34), (1, 18, 32, 33), 
$$
$$
(1, 19, 24, 40), (1, 21, 24, 38), (1, 21, 25, 37), (1, 21, 29, 33), (1, 21, 30, 32), (1, 24, 29, 30)\}
$$
$m=48$, $e_m=16$, $o_m=2$
$$
\{(1, 25, 32, 38), (1, 25, 34, 36)\}
$$
$m=60$, $e_m=204$, $o_m=23$
$$
\{(1, 12, 49, 58), (1, 15, 49, 55), (1, 17, 49, 53), (1, 20, 41, 58), (1, 20, 49, 50), 
$$
$$
(1, 23, 47, 49), (1, 24, 41, 54), (1, 24, 46, 49), (1, 25, 45, 49), (1, 27, 41, 51), 
$$
$$
(1, 29, 41, 49), (1, 30, 40, 49), (1, 30, 41, 48), (1, 31, 34, 54), (1, 31, 38, 50), 
$$
$$
(1, 31, 40, 48), (1, 31, 42, 46), (1, 36, 41, 42), (2, 15, 48, 55), (2, 21, 40, 57), 
$$
$$
(2, 21, 46, 51), (2, 25, 38, 55), (3, 32, 33, 52)\}
$$
$m=66$, $e_m=30$, $o_m=2$
$$
\{(1, 25, 44, 62), (2, 39, 45, 46)\}
$$
$m=72$, $e_m=12$, $o_m=1$
$$
\{(3, 16, 57, 68)\}
$$
$m=78$, $e_m=32$, $o_m=2$
$$
\{(1, 32, 61, 62), (1, 39, 55, 61)\}
$$
$m=84$, $e_m=66$, $o_m=6$
$$
\{(1, 29, 63, 75), (1, 43, 48, 76), (1, 43, 50, 74), (1, 43, 58, 66), (1, 43, 60, 64), (2, 
33, 58, 75)\}
$$
$m=90$, $e_m=24$, $o_m=1$
$$
\{(3, 20, 72, 85)\}
$$
$m=120$, $e_m=72$, $o_m=5$
$$
\{(1, 49, 83, 107), (1, 61, 80, 98), (1, 61, 82, 96), (2, 25, 98, 115), (4, 25, 96, 115)\}
$$
$m=156$, $e_m=24$, $o_m=1$
$$
\{(1, 79, 110, 122)\}
$$
$m=180$, $e_m=24$, $o_m=1$
$$
\{(3, 40, 147, 170)\}
$$

\vskip 0.3in


\begin{thebibliography}{MSTYY}
\bibitem[A]{A} 
Aoki,N., On some arithmetic problems related to the 
Hodge cycles on the Fermat varieties. Math. Ann.  266  (1983),  no. 1, 23--54.
%
\bibitem[AOT]{AOT} Asakura,M.,Otsubo,N.,Terasoma,T.
An algebro-geometric study of the unit arguments  $\ _3F_2 (1,1,q;a,b;1)$ I,
preprint, arXiv:1603.04558 %
\bibitem[AS]{AS} Aoki,N., Shioda, T.,
Generators of the Neron-Severi group of a Fermat surface.  
Arithmetic and geometry, Vol. I,  1--12, Progr. Math., 35, 
Birkhauser Boston, Boston, MA, 1983. 
%
\bibitem[AY]{AY} Asakura, M., Yabu, To appear.
%
\bibitem[S]{S} 
Shioda,T., On the Picard number of a Fermat surface. 
J. Fac. Sci. Univ. Tokyo Sect. IA Math.  28  (1981), no. 3, 725--734
%
\bibitem[D]{D}
Deligne, P., Theorie de Hodge II,Publications Mathematiques de l'IHES (1971) 
Volume: 40, page 5-57
%
\bibitem[E]{E} A. Erd\'elyi (Editor), 
\textsl{Higher Transcendental Functions}, 
vol. I, McGraw-Hill, New York, 1953. 
%


\end{thebibliography}
\end{document}